\providecommand{\norm}[1]{\ensuremath{\left\lVert#1\right\rVert }}
\providecommand{\mnorm}[1]{\ensuremath{\left\lvert#1\right\rvert}}
\providecommand{\floor}[1]{\ensuremath{\left\lfloor#1\right\rfloor}}
\newtheorem{theorem}{\bfseries Theorem}
\newtheorem{lemma}{\bfseries Lemma}
\newtheorem{remarks}{\bfseries Remarks}
\newtheorem{corollary}{\bfseries Corollary}
\newtheorem{fact}{\bfseries Fact}
\def\R{\mathbb{R}}
\def\A{A^TA}
\def\N{\mathcal{N}}
\DeclareRobustCommand{\bigO}{%
  \text{\usefont{OMS}{cmsy}{m}{n}O}%
}
\title{
Iterative Pre-Conditioning for Expediting the Gradient-Descent Method:\\ The Distributed Linear Least-Squares Problem
}
\author{Kushal Chakrabarti$^\star$, Nirupam Gupta$^\dagger$, and Nikhil Chopra$^\star$
\thanks{$^\star$ University of Maryland, College Park, Maryland 20742, U.S.A. \\
$^\dagger$ \'Ecole polytechnique f\'ed\'erale de Lausanne (EPFL) CH-1015 Lausanne \\
Emails: {\em kchakrabarti0@gmail.com}, {\em nirupam115@gmail.com} and {\em nchopra@umd.edu}}%
}
\date{}
\begin{document}

\maketitle

\begin{abstract}


This paper considers the multi-agent linear least-squares problem in a {\em server-agent} network. In this problem, the system comprises multiple agents, each having a set of local data points, that are connected to a server. The goal for the agents is to compute a linear mathematical model that optimally fits the collective data points held by all the agents, without sharing their individual local data points. This goal can be achieved, in principle, using the server-agent variant of the traditional iterative gradient-descent method. The gradient-descent method converges {\em linearly} to a solution, and its {\em rate of convergence} is lower bounded by the {\em conditioning} of the agents' collective data points. If the data points are {\em ill-conditioned}, the gradient-descent method may require a large number of iterations to converge.\\

We propose an {\em iterative pre-conditioning} technique that mitigates the deleterious effect of the conditioning of data points on the rate of convergence of the gradient-descent method. 
We rigorously show that the resulting {\em pre-conditioned} gradient-descent method, with the proposed iterative pre-conditioning, achieves {\em superlinear} convergence when the least-squares problem has a unique solution. In general, the convergence is {\em linear} with improved {\em rate of convergence} in comparison to the traditional gradient-descent method and the state-of-the-art {\em accelerated} gradient-descent methods. We further illustrate the improved rate of convergence of our proposed algorithm through experiments on different real-world least-squares problems in both noise-free and noisy computation environment.



\end{abstract}
\newpage

\tableofcontents
\newpage

\section{Introduction}
\label{sec:intro}
In this paper, we consider the multi-agent distributed linear least-squares problem. The nomenclature {\em distributed} here refers to the data points being distributed across multiple agents. Specifically, we consider a system that comprises of multiple agents where each agent has a set of local data points. The agents can communicate bidirectionally with a central server as shown in Fig.~\ref{fig:sys}. However, there is no inter-agent communication, and the agents cannot share their individual local data points with the server. The goal for the agents is to compute a linear mathematical model that optimally fits the collective data points of all the agents. For doing so, as a single agent does not have access to all the data points, the agents must collaborate with the server. Throughout this paper, we refer to the above described system architecture as {\em server-agent network}, and we assume the system to be synchronous unless mentioned otherwise. \\

\tikzstyle{master} = [rectangle, rounded corners, minimum width=1.7cm, minimum height=1cm,text centered, text width=1cm, draw=black, fill=blue!30]
\tikzstyle{machine} = [rectangle, minimum width=1cm, minimum height=1cm,text centered, text width=2cm, draw=black, fill=blue!10]
\tikzstyle{dots} = [circle, inner sep=0pt,minimum size=2pt, draw=black, fill=blue!50!cyan]
\tikzstyle{arrow} = [thick,<->,>=stealth]

\begin{figure}[thpb]  
\centering
\begin{tikzpicture}[node distance = 1.5cm, auto]
    \node (master) [master] {Server};
    \node (m/c2) [machine, below  = of master] {Agent 2\\ $(A^2,B^2)$};
    \node (m/c1) [machine, left = of m/c2, xshift=1cm] {Agent 1\\ $(A^1,B^1)$};
    \node (d1) [dots, right = of m/c2, xshift=-0.8cm] {};
    \node (d2) [dots, right = of d1, xshift=-1.4cm] {};
    \node (d3) [dots, right = of d2, xshift=-1.4cm] {};
    \node (m/c3) [machine, right = of d3, xshift=-0.8cm] {Agent m\\ $(A^m,B^m)$};
    
    \draw[arrow] (master) -- (m/c1);
    \draw[arrow] (master) -- (m/c2);
    \draw[arrow] (master) -- (m/c3);
\end{tikzpicture}
\caption{System architecture.}
\label{fig:sys}
\end{figure}
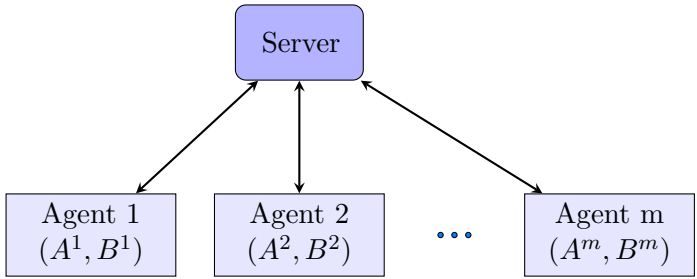

Specifically, we consider a system with $m$ agents. Each agent $i$ has a set of $n_i$ data points represented by the rows of a $(n_i \times d)$-dimensional real-valued matrix $A^i$, and the elements of a $n_i$-dimensional real-valued vector $B^i$. That is, for each agent $i$, $A^i \in \R^{n_i \times d}$ and $B^i \in \R^{n_i}$. The goal for the agents is to compute a parameter vector $x^* \in \R^d$ such that
\begin{align}
    x^* \in X^* = \arg \min_{x \in \R^d} \sum_{i = 1}^m \frac{1}{2}\norm{A^i x - B^i}^2. \label{eqn:opt_1}
\end{align}
We refer to matrix $A^i$ and vector $B^i$ as local {\em data matrix} and local {\em observations}, respectively of agent $i$. For each agent $i$, we define a local {\em cost function}
\begin{align}
    F^i(x) =   \frac{1}{2}\norm{A^i x - B^i}^2, \quad \forall x \in \R^d.  \label{eqn:dist_opt}
\end{align}
It is easy to see that solving for the optimization problem~\eqref{eqn:opt_1} is equivalent to computing a minimum point of the {\em aggregate cost function} $\sum_{i = 1}^m F^i(x)$. \\

Common applications of the above linear least-squares problem include linear regression, state estimation, and hypothesis testing~\cite{zhang2017stochastic, zhu2018linear}. Also, a wide range of supervised machine learning problems can be modelled as a linear least-squares problem, such as the supply chain demand forecasting~\cite{carbonneau2008application}, prediction of online user input actions~\cite{canny2013method}, and the problem of selecting sparse linear solvers~\cite{bhowmick2006application}. In several contemporary applications, the data points exist as dispersed over several sources. Due to industry competition, administrative regulations, and user privacy, it is almost impossible to integrate the data points from those isolated sources~\cite{yang2019federated}. This has brought the researcher community's focus towards collaboratively fitting a prediction model such as~\eqref{eqn:opt_1} while keeping all the raw data in its device, without requiring data-transaction among the sources and to the server~\cite{smith2017federated,yang2019federated}. Herein lies our motivation to improve upon the state-of-the-art method for solving~\eqref{eqn:opt_1} distributively in a server-agent network.\\

As elaborated below, the agents can solve for an optimal linear model~\eqref{eqn:opt_1} using the server-agent network version of the traditional gradient-descent method~\cite{bertsekas1989parallel}.

\subsection{Background: Gradient-Descent Method}
\label{sub:dgd}
The gradient-descent method is an iterative algorithm wherein the server maintains an estimate of a solution defined by~\eqref{eqn:opt_1} and updates it iteratively using gradients of agents' local cost functions. To be precise, for each iteration $t = 0, \, 1, \ldots$ , let $x(t) \in \R^d$ denote the estimate maintained by the server. The initial estimate $x(0)$ may be chosen arbitrarily from $\R^d$. For each iteration $t$, the server broadcasts $x(t)$ to all the agents. Each agent $i$ computes the gradient of its local cost function $F^i(x)$ at $x = x(t)$ denoted by $g^i(t)$. Specifically, 
\begin{align}
    g^i(t) = \nabla F^i(x(t)) = \left( A^i \right)^T \, \left(A^i \, x(t) - B^i\right), \quad \forall i \in \{1, \ldots, \, m\}, ~ t \in \{0, \, 1, \ldots \}, \label{eqn:g_i}
\end{align}
where $(\cdot)^T$ denotes the transpose. The agents send their computed gradients $\{g^i(t), ~ i = 1, \ldots, \, m \}$ to the server. Upon receiving the gradients, the server updates $x(t)$ as follows:
\begin{align}
    x(t+1) = x(t) - \delta \, \sum_{i = 1}^m g^i(t), \quad \forall t \in \{0, \, 1, \ldots \}, \label{eqn:dgd}
\end{align}
where $\delta$ is a positive scalar real value commonly referred as the {\em step-size}. 
Let $g(t)$ denote the sum of all the agents' gradients, that is, for all $t$,
\begin{align}
    g(t) = \sum_{i = 1}^m g^i(t). \label{eqn:sum_grads}
\end{align}
Substituting from~\eqref{eqn:sum_grads} in~\eqref{eqn:dgd}, we can see that the gradient-descent method in a server-agent network (ref. Fig.~\ref{fig:sys}) is equivalent to its centralized version where the cost function is equal to the summation of all the agents' local cost functions $\sum_{i = 1}^m F^i(x)$ (see~\cite{bertsekas1989parallel}).
Therefore, for small enough step-size $\delta$, the sequence of gradients $\{g(t), ~ t  = 0, \, 1, \ldots\}$ converges {\em linearly} to $0_d$. To be precise, for sufficiently small $\delta$ there exists $\mu \in [0, \, 1)$ such that~\cite{fessler2008image},
\begin{align*}
    \norm{g(t)} \leq \mu^{t} \norm{g(0)}, \quad \forall t \in \{0, \, 1, \ldots\}.
\end{align*}
Equivalently, due to convexity of the optimization problem~\cite{bertsekas1989parallel}, the sequence of estimates $\{x(t), ~ t  = 0, \, 1, \ldots \}$ also converge linearly to a point in the solution set $X^*$ defined in~\eqref{eqn:opt_1}. The scalar $\mu$ is referred as the {\em rate of convergence}~\cite{kelley1999iterative}. As is evident from above, a smaller value of $\mu$ implies a faster convergence, and vice-versa. However, as elaborated later in Section~\ref{sec:comp}, the value of $\mu$ is lower bounded by a non-negative value $\mu_{GD}$ that depends upon the {\em condition number} of the data matrix
\begin{align}
    A = \begin{bmatrix} (A^1)^T, \ldots, \, (A^m)^T \end{bmatrix}^T. \label{eqn:data_matrix}
\end{align}
Note that the matrix $A$ is of dimension $(\sum_{i = 1}^m n_i) \times d$. \\

We propose an {\em iterative pre-conditioning} technique that improves upon the rate of convergence of the gradient-descent method in a server-agent network. Specifically, in each iteration, the server multiplies the aggregate of the agents' gradients $g(t)$ by a {\em pre-conditioner matrix} $K$ before updating the local estimates. However, unlike the classical pre-conditioning techniques~\cite{fessler2008image}, in our case, the server iteratively updates the pre-conditioner matrix $K$. Hence, the name {\em iterative pre-conditioning}. A detailed description of the resulting pre-conditioned gradient-descent method and its convergence properties are given in Section~\ref{sec:algo}.\\

Before we present our proposed technique, let us review below the existing state-of-the-art techniques for improving the rate of convergence of the traditional gradient-descent method. As elaborated later in Section~\ref{sec:comp}, the techniques disucssed below are applicable to the server-agent network.

\subsection{Related Work} 
\label{sec:lit_survey}


In the seminal work~\cite{nesterov27method}, Nesterov showed that the use of {\em momentum} can significantly {\em accelerate} the gradient-descent method. Recently, there has been work on the applicability of Nesterov's {\em accelerated gradient-descent} method to the server-agent network, such as~\cite{azizan2019distributed} and references therein. Azizan-Ruhi et al.~\cite{azizan2019distributed} have proposed an {\em accelerated projection} method, which is a combination of the Nesterov's accelerated gradient-descent method with a projection operation. Azizan-Ruhi et al. have shown through experiments that their {\em accelerated projection} method converges faster compared to the variants of the Nesterov's accelerated gradient-descent method and the {\em heavy-ball method}~\cite{polyak1964some}. However, they do not provide any theoretical guarantee for the improvement in the convergence speed. Also, Azizan-Ruhi et al. only consider a degenerate case of the optimization problem~\eqref{eqn:opt_1} where the set of linear equations $A^i x = B^i, ~ i = 1, \ldots, \, m$, has a unique solution. We consider a more general setting wherein the minimum value of the aggregate cost function $\sum_{i = 1}^m F^i(x)$ need not be zero. Also, in general, the solution for the optimization problem~\eqref{eqn:opt_1} need not be unique.\\

The {\em heavy-ball method}~\cite{polyak1964some} is another momentum-based accelerated variant of the gradient-descent method. In contrast to Nesterov's method, which uses the current and the previous momentum terms, the heavy-ball method only uses the current momentum term for updating the current estimate. The heavy-ball method is guaranteed to converge faster than both the gradient-descent method and Nesterov's accelerated method. For the case when the optimization problem~\eqref{eqn:opt_1} has a unique solution, both these accelerated methods, namely the heavy-ball method and Nesterov's accelerated gradient-descent method, are known to converge linearly with rate of convergence smaller than the above traditional gradient-descent method~\cite{lessard2016analysis,fazlyab2018analysis}.\\

The second-order {\em Newton's method} has a {\em quadratic convergence}, and therefore, it has a {\em superlinear} rate of convergence~\cite{kelley1999iterative}. However, Newton's method cannot be implemented in the distributed server-agent network unless the agents share their local data points with the server. Quasi-Newton methods, on the other hand, such as {\em BFGS}~\cite{kelley1999iterative} can be executed in the server-agent network architecture.\footnote{BFGS stands for Broyden, Fletcher, Goldfarb, and Shanno, who proposed the algorithm~\cite{kelley1999iterative}.} However, similar to Newton's method, BFGS also needs the solution of the optimization problem~\eqref{eqn:opt_1} to be unique.


\subsection{Summary of Our Contributions}
\label{sub:contri}
We propose an {\em iterative pre-conditioning} technique for improving the rate of convergence of the traditional gradient-descent method, when solving the aforementioned distributed linear least-squares problem in a server-agent network. Details of our algorithm are presented in Section~\ref{sec:algo}. We summarize below our key contributions.

\begin{enumerate}
    \item We show, in Sections~\ref{sub:conv}, that in general our algorithm converges {\em linearly} to a solution defined by~\eqref{eqn:opt_1} with improved rate of convergence in comparison to the traditional gradient-descent method described above in Section~\ref{sub:dgd} for the server-agent network. Refer Section~\ref{sec:comp} for a rigorous comparison between our algorithm and the traditional gradient-descent method.
    
    \item For the special case when the solution of the least-squares problem~\eqref{eqn:opt_1} is unique we show, in Section~\ref{sub:conv}, that our algorithm converges {\em superlinearly}. This is an improvement over the server-agent network versions of the heavy-ball method, Nesterov's accelerated gradient-descent method, and the accelerated projection method  which are only known to converge {\em linearly}~\cite{lessard2016analysis,fazlyab2018analysis,azizan2019distributed}. See Section~\ref{sec:comp} for more details.
    
    \item We show, in Section~\ref{sec:quad}, that the proposed algorithm is also applicable to the more general distributed convex quadratic minimization problem in a server-agent network.
    
    \item We present an analysis of our proposed algorithm regarding its sensitivity towards system noise. See Appendix~\ref{sec:noise} for details.
    
    \item We illustrate our obtained theoretical comparisons with existing algorithms through numerical experiments on different real-world datasets. Detailed presentation of our experimental results is given in Section~\ref{sec:exp}. These results also suggest that our proposed algorithm is less sensitive to system noise than the aforementioned existing methods.

\end{enumerate}




The idea of iterative pre-conditioning was first proposed in our conference paper~\cite{chak2020ipc}. However, in~\cite{chak2020ipc}, we consider a special case when the solution set $X^*$ (defined in~\eqref{eqn:opt_1}) is singleton. In comparison to~\cite{chak2020ipc}, this paper includes a more detailed convergence analysis of our proposed algorithm for the more general least-squares problem whose solution~\eqref{eqn:opt_1} may not be unique. The current paper presents rigorous comparisons between the convergence of our algorithm and other state-of-the-art algorithms. The current paper also includes theoretical and experimental evaluations of our algorithm in presence of system noise, applicability to solving of the more general convex quadratic problem in the server-agent network. The experiments in the current paper are also extensive compared to~\cite{chak2020ipc}.


\subsection*{Paper Outline}

The rest of this paper is organized as follows. In Section~\ref{sec:algo}, we present our proposed algorithm and its convergence properties. 
Section~\ref{sec:comp} presents rigorous comparisons between the convergence rate of our proposed algorithm and other state-of-the-art algorithms. Section~\ref{sec:quad} presents the extended applicability of the proposed algorithm to solving of convex quadratic minimization problem in a server-agent network.
Section~\ref{sec:analysis} presents the formal proof for the main convergence result of our algorithm. Section~\ref{sec:exp} presents experimental evaluations. Finally, the contributions made in the paper are summarized in Section~\ref{sec:summary}. The paper comprises two appendices. In Appendix~\ref{sec:proof}, we present formal proofs of some elementary results that are used in the proof of the main result presented in Section~\ref{sec:analysis}. In Appendix~\ref{sec:noise}, we discuss in detail the effects of system noise on the proposed algorithm.


\newpage

\section{Proposed Algorithm}
\label{sec:algo}
This section presents our algorithm, its computational complexity, and its formal convergence properties. \\
 

The proposed algorithm is built on top of the gradient-descent method in a server-agent network described in Section~\ref{sub:dgd}. However, as elaborated below, before updating the current estimates using the aggregate of the agents' gradients, the server multiplies the gradients by a matrix. This technique of multiplication of the gradients by a matrix, in the gradient-descent method, is popularly known as {\em pre-conditioning}~\cite{nocedal2006numerical}. The matrix being multiplied is known as the {\em pre-conditioner} matrix. Unlike existing pre-conditioning techniques~\cite{fessler2008image}, in our case, the {\em pre-conditioner} matrix gets updated after each iteration. Hence, we name our pre-conditioning technique as {\em iterative pre-conditioning}.\\


In each iteration $t \in \{0, \, 1, \ldots\}$, the server maintains an estimate $x(t)$ of a minimum point~\eqref{eqn:opt_1}, and a $(d \times d)$-dimensional real-valued pre-conditioner matrix $K(t)$. The initial estimate $x(0)$ and matrix $K(0)$ are chosen arbitrarily from $\R^d$ and the set of $(d \times d)$-dimensional real-valued matrices $\R^{d \times d}$, respectively. Also, before initializing the iterative process, the server chooses three non-negative scalar real-valued parameters $\alpha$, $\delta$ and $\beta$. The parameter $\beta$ is sent to the agents.\\

Recall, from Section~\ref{sec:intro}, that each agent $i \in \{1, \ldots, \, m\}$ has a local cost function 
\[F^i(x) = \frac{1}{2} \norm{A^i x - B^i}^2, \quad x \in \R^d,\]
where the pair $(A^i, \, B^i)$ denotes the local data points held by agent $i$. For each iteration $t \geq 0$, the algorithm comprises four steps presented below.

\subsection{Steps for Each Iteration}
\label{sec:algo_steps}
The algorithm comprises of four steps described below. The steps are executed collaboratively by the server and the agents. This algorithm has been presented in our previous work~\cite{chak2020ipc}.

\begin{itemize}
\setlength\itemsep{0.5em}
    \item {\em Step 1:} The server sends the estimate $x(t)$ and the matrix $K(t)$ to each agent $i$.
    \item {\em Step 2:} Each agent $i$ computes the gradient
    \begin{align}
        g^i(t) = \nabla F^i(x(t)) = \left( A^i \right)^T \, \left(A^i \, x(t) - B^i\right). \label{eqn:g_i}
    \end{align}
    Let $I$ be the $(d \times d)$-dimensional identity matrix. Let $e_j$ and $k_j(t)$ denote the $j$-th columns of matrices $I$ and $K(t)$, respectively. In the same step, each agent $i$ computes a set of vectors $\left\{R^i_j(t): ~ j = 1, \ldots, \, d \right\}$ such that for each $j$,
    \begin{align}\label{eqn:Rij}
        R^i_j(t) \hspace{-0.2em} & = \hspace{-0.2em}\left((A^i)^T \hspace{-0.2em}A^i + \dfrac{\beta}{m}I\right) k_j(t) \hspace{-0.1em} -  \dfrac{1}{m} e_j,
    \end{align}
    where $\beta$ is a non-negative real value. 
    \item {\em Step 3:} Each agent $i$ sends the gradient $g^i(t)$ and the set $\left\{R^i_j(t), ~ j = 1, \ldots, \, d \right\}$ to the server.
    \item {\em Step 4:} The server updates the matrix $K(t)$ to $K(t+1)$ such that
    \begin{align}
        k_j(t + 1) = k_j(t) - \alpha \sum_{i=1}^m R^i_j(t), \quad j = 1,...,d, \label{eqn:kcol_update}
    \end{align}
    where $\alpha$ is a positive constant real value. Then, the server updates the estimate $x(t)$ to $x(t + 1)$ such that
    \begin{align}
        x(t+1) = x(t) - \delta K(t + 1) \sum_{i=1}^m g^i(t), \label{eqn:x_update}
    \end{align}
    where $\delta$ is a positive constant real value, called the {\em step-size}.
\end{itemize}


\begin{algorithm}
  \caption{Distributed {\em iterative pre-conditioning} for the gradient-descent method.}\label{algo_1}
  \begin{algorithmic}[1]
    \State The server initializes $x(0) \in \R^d$, $K(0) \in \R^{d \times d}$, $\alpha > 0$, $\delta > 0$ and $\beta \geq 0$.
    \For{\texttt{$t=0, \, 1, \, 2, \ldots$}}
      \State The server sends $x(t)$ and $K(t)$ to each agent $i \in \{1, \ldots, \, m\}$.
      \State Each agent $i$ computes the gradient $g^i(t)$ as defined by~\eqref{eqn:g_i}, and a set of vectors $\left\{R^i_j(t), ~ j = 1, \ldots, \, d \right\}$ as defined by~\eqref{eqn:Rij}.
      \State Each agent $i$ sends $g^i(t)$ and the set $\left\{R^i_j(t), ~ j = 1, \ldots, \, d \right\}$ to the server.
      \State The server updates $K(t)$ to $K(t + 1)$ as defined by~\eqref{eqn:kcol_update}.
      \State The server updates the estimate $x(t)$ to $x(t + 1)$ as defined by~\eqref{eqn:x_update}.
    \EndFor
  \end{algorithmic}
\end{algorithm}

The algorithm is summarized in Algorithm~\ref{algo_1}. Next, we discuss the computational complexity of the algorithm. 

\subsection{Computational Complexity}
We present the computational complexity of Algorithm~\ref{algo_1}, for both the agents and the server, in terms of the total number of floating-point operations (\textit{flops}) required per iteration. As floating-point multiplication is significantly costlier than floating-point additions~\cite{golub2012matrix}, we ignore the additions while counting the total number of flops.\\

For each iteration $t$, each agent $i$ computes the gradient $g^i(t)$, defined in~\eqref{eqn:g_i}, and $d$ vectors $\{\R^i_j(t) : ~ j=1,...,d\}$, defined in~\eqref{eqn:Rij}. Computation of $g^i(t)$ requires two matrix-vector multiplications, namely $A^i \, x(t)$ and $(A^i)^T \, \left(A^i \, x(t) - b^i\right)$, in that order. As $A^i$ is an $(n_i \times d)$-dimensional matrix and $x(t)$ is a $d$-dimensional vector, computation of gradient $g^i(t)$ requires $\bigO(n_i d)$ flops. Recall, from~\eqref{eqn:Rij}, that for each $j \in \{1, \ldots, \, d\}$,
\begin{align*}
        R^i_j(t) \hspace{-0.2em} & = \hspace{-0.2em}(A^i)^T \hspace{-0.2em}A^i k_j(t) + \dfrac{\beta}{m} k_j(t) \hspace{-0.1em} -  \dfrac{1}{m} e_j.
    \end{align*}
Thus, computation of each vector $R^i_j(t)$ requires two matrix-vector multiplications, namely $A^i \, k_j(t)$ and $(A^i)^T(A^i k_j(t))$, in that order. As $A^i$ is an $(n_i \times d)$-dimensional matrix, and both vectors $k_j(t)$ and $A^i \, k_j(t)$ are of dimensions $d$, computation of each $R^i_j(t)$ requires $\bigO(n_i d)$ flops. Thus, net computation of $d$ vectors $\{\R^i_j(t) : ~ j=1,...,d\}$ requires $\bigO(n_i d^2)$ flops. Therefore, the computational complexity of Algorithm~\ref{algo_1} for each agent $i$ is $\bigO(n_i d^2)$ flops, for each iteration. Note that, the computation of each member in the set $\{\R^i_j(t) : ~ j=1,...,d\}$ is independent of each other. Hence, agent $i$ can compute the $d$ vectors $\{\R^i_j(t) : ~ j=1,...,d\}$ in parallel. \\

For each iteration $t$, the server computes the matrix $K(t+1)$, defined in~\eqref{eqn:kcol_update}, and vector $x(t+1)$, defined in~\eqref{eqn:x_update}. Note that the computation of $K(t+1)$ only requires $\bigO(d)$ floating-point additions, and thus, can be ignored. In~\eqref{eqn:x_update}, the computation of $K(t+1) \sum_{i=1}^m g^i(t)$ requires only one matrix-vector multiplication between the $d \times d$ dimensional matrix $K(t+1)$ and the $d$-dimensional vector $\sum_{i=1}^m g^i(t)$. Thus, computation of $x(t+1)$ requires $\bigO(d^2)$ flops. Therefore, the computational complexity of Algorithm~\ref{algo_1} for the server is $\bigO(d^2)$ flops, for each iteration. Next, we present the formal convergence guarantees for Algorithm~\ref{algo_1}.

\subsection{Convergence Guarantees}
\label{sub:conv}
For a formal presentation of the convergence for Algorithm~\ref{algo_1}, we make a few elementary observations and define some notations below. 

\begin{itemize}
\setlength\itemsep{0.5em}
   \item Define the {\em collective observation vector} as
   \begin{align}
    B = \begin{bmatrix} (B^1)^T, \ldots, \, (B^m)^T \end{bmatrix}^T. \label{eqn:b_vector}
   \end{align}
    \item As the matrix $A^T A$ is positive semi-definite, if $\beta > 0$ then the matrix $\left(\A+\beta I\right)$ is positive definite, and therefore, invertible. We define
    \begin{align}
        K_{\beta} = \left(\A+\beta I\right)^{-1}. \label{eqn:def_k_beta}
    \end{align}
    The eigenvalues of matrix $\A$ are non-negative. Let $\lambda_1, \ldots, \, \lambda_d$ denote the eigenvalues of $\A$ such that $\lambda_1 \geq \ldots \geq \lambda_d \geq 0$. 
    
    \item Let the rank of matrix $\A$ be $r$. The value of $r$ is equal to $d$ if and only if the matrix $A$ is full column rank. In general, when $\A$ is not the trivial zero matrix, $1 \leq r \leq d$. Note that if $r < d$ then
    \begin{align}
        \lambda_1 \geq \ldots \geq \lambda_r > \lambda_{r + 1} = \ldots = \lambda_d = 0. \label{eqn:rank_eig}
    \end{align}
    
     \item For a matrix $M \in \R^{d \times d}$, let $\norm{M}_F$ denote its Frobenius norm, which is defined as the square root of the sum of squares of its elements~\cite{meyer2000matrix}. Specifically, if $m_{ij}$ denotes the $(i, \, j)$-th element of matrix $M$ then
     \begin{align}
         \norm{M}_F = \sqrt{\sum_{i=1}^d \sum_{j=1}^d m_{ij}^2}. \label{eqn:frob_norm}
     \end{align}

\end{itemize}

For each agent $i$, recall from~\eqref{eqn:dist_opt}, the cost function $F^i(x)$ is convex. Thus, the aggregate cost function $\sum_{i = 1}^m F^i(x)$ is also convex. Therefore, a point $x^* \in X^*$ if and only if
\[\nabla \sum_{i = 1}^m F^i(x^*) = 0_d,\]
where $0_d$ denotes the $d$-dimensional zero vector. For each iteration $t$, let $g(t)$ denote the gradient of the aggregate cost function $\sum_{i = 1}^m F^i(x)$ at $x = x(t)$. Recall, from~\eqref{eqn:g_i}, that for each $i$, $g^i(t) = \nabla F^i(x(t))$. Then,
\begin{align}
    g(t) = \nabla \sum_{i = 1}^m F^i(x(t)) = \sum_{i = 1}^m g^i(t). \label{eqn:grad_1}
\end{align}
The parameters defined below determine the minimum {\em rate of convergence} of Algorithm~\ref{algo_1}. Let,
\begin{align}
    \mu^* & = \frac{\lambda_1 - \lambda_r }{\lambda_1 + \lambda_r + 2 (\lambda_1 \lambda_r/ \beta)}, \text{ and } \label{eqn:opt_mu} \\
    \varrho & = \frac{\lambda_1 - \lambda_d }{\lambda_1 + \lambda_d + 2\beta}. \label{eqn:opt_rho}
\end{align}
We now present below the key result in the form of Theorem~\ref{thm:thm1}, on the convergence of Algorithm~\ref{algo_1}. 

\begin{theorem} \label{thm:thm1}
Consider Algorithm~\ref{algo_1}. If 
\begin{align}
    0 < \alpha < \frac{2}{\lambda_1 + \beta} ~ \text{, and }  ~ 0 < \delta < 2\left(\frac{\lambda_1 + \beta}{\lambda_1}\right), \label{eqn:alpha_delta}
\end{align}
then there exists non-negative real values $\mu$ and $\rho$ with
\begin{align}
    \mu^* \leq \mu < 1 ~ \text{, and } ~ \varrho \leq \rho < 1, \label{eqn:mu_rho}
\end{align}
such that the following hold true.
\begin{itemize}
    \item[(i)] For each iteration $t \geq 0$, 
    \begin{align}
        \norm{g(t+1)} ~ \leq ~ \left(\mu + \delta \lambda_1 \norm{K(0)-K_{\beta}}_F \, \rho^{t+1}\right) \, \norm{g(t)}, \label{eqn:conv_1}
    \end{align}
    where $\mu = \mu^*$ if 
    \begin{align}
        \delta = \frac{2}{\frac{\lambda_1}{\lambda_1 + \beta} + \frac{\lambda_r}{\lambda_r + \beta}}. \label{eqn:critical_delta}
    \end{align}
    \item[(ii)] For every $\epsilon > 0$ there exists a positive integer $N_{\epsilon}$ such that 
    \begin{align}
        \norm{g(t)} \leq \epsilon, \, \forall t \geq N_\epsilon. \label{eqn:stop}
    \end{align}
\end{itemize}
\end{theorem}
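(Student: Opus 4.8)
The plan is to decouple the evolution of the pre-conditioner matrix $K(t)$ from the evolution of the gradient $g(t)$, analyze each separately, and then recombine. First I would study the recursion~\eqref{eqn:kcol_update}. Since the columns $k_j(t)$ update independently and $R^i_j(t)$ is affine in $k_j(t)$, summing~\eqref{eqn:Rij} over $i$ gives $\sum_i R^i_j(t) = (A^TA + \beta I)k_j(t) - e_j$, so~\eqref{eqn:kcol_update} reads $k_j(t+1) = \bigl(I - \alpha(A^TA+\beta I)\bigr)k_j(t) + \alpha e_j$. The fixed point of this iteration is exactly the $j$-th column of $K_\beta = (A^TA+\beta I)^{-1}$, so subtracting the fixed point yields $k_j(t+1) - k_{\beta,j} = \bigl(I - \alpha(A^TA+\beta I)\bigr)(k_j(t) - k_{\beta,j})$. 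Under the step-size bound $0 < \alpha < 2/(\lambda_1+\beta)$ the matrix $I - \alpha(A^TA+\beta I)$ is symmetric with spectral radius $\rho := \max_k |1 - \alpha(\lambda_k+\beta)| < 1$; choosing $\alpha$ optimally gives $\rho = \varrho$ of~\eqref{eqn:opt_rho}, and in general $\varrho \le \rho < 1$. Squaring and summing over $j$ then gives the Frobenius-norm bound $\norm{K(t) - K_\beta}_F \le \rho^{t}\norm{K(0)-K_\beta}_F$. This is the first half of~\eqref{eqn:mu_rho}.

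Next I would turn to the estimate update~\eqref{eqn:x_update}. Using $g(t) = A^TA\,x(t) - A^TB$ (summing~\eqref{eqn:g_i}) and the fact that at any $x^* \in X^*$ we have $A^TA\,x^* = A^TB$, one gets $g(t+1) = A^TA\bigl(x(t+1) - x^*\bigr) = \bigl(I - \delta\, A^TA\, K(t+1)\bigr)g(t)$. I would split $K(t+1) = K_\beta + \bigl(K(t+1) - K_\beta\bigr)$ to write $g(t+1) = \bigl(I - \delta\, A^TA\, K_\beta\bigr)g(t) - \delta\, A^TA\,(K(t+1)-K_\beta)\, g(t)$. The first term is governed by the spectrum of $A^TA\,K_\beta = A^TA(A^TA+\beta I)^{-1}$, which has eigenvalues $\lambda_k/(\lambda_k+\beta)$: the nonzero ones lie between $\lambda_r/(\lambda_r+\beta)$ and $\lambda_1/(\lambda_1+\beta)$, and since $g(t)$ always lies in the range of $A^TA$ (it equals $A^TA$ times something), $I - \delta\, A^TA\, K_\beta$ acts on $g(t)$ with gain at most $\mu := \max\bigl(|1-\delta\lambda_1/(\lambda_1+\beta)|,\ |1-\delta\lambda_r/(\lambda_r+\beta)|\bigr)$. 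The step-size bound $0 < \delta < 2(\lambda_1+\beta)/\lambda_1$ ensures $\mu < 1$, minimizing over $\delta$ gives $\mu = \mu^*$ of~\eqref{eqn:opt_mu} attained at the critical $\delta$ of~\eqref{eqn:critical_delta}, and otherwise $\mu^* \le \mu < 1$. For the second term I would bound its operator contribution by $\delta\,\norm{A^TA}_2\,\norm{K(t+1)-K_\beta}_F \le \delta\lambda_1\norm{K(0)-K_\beta}_F\,\rho^{t+1}$, using the Frobenius bound from the previous paragraph. Combining the two terms by the triangle inequality gives exactly~\eqref{eqn:conv_1}.

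For part (ii), I would iterate~\eqref{eqn:conv_1} to get $\norm{g(t)} \le \norm{g(0)}\prod_{s=0}^{t-1}\bigl(\mu + c\,\rho^{s+1}\bigr)$ with $c := \delta\lambda_1\norm{K(0)-K_\beta}_F$. Since $\mu < 1$ and $\rho < 1$, for $s$ large enough the factor $\mu + c\rho^{s+1}$ is bounded by some $\mu' \in (\mu,1)$, so the tail of the product decays geometrically; the finitely many early factors only contribute a fixed constant. Hence $\norm{g(t)} \to 0$, which immediately yields the existence of $N_\epsilon$ for every $\epsilon > 0$. (When $r = d$ one additionally observes $\mu^* = 0$ is achievable, which after a sharper look at the product gives superlinear convergence — but the statement of Theorem~\ref{thm:thm1} only asks for~\eqref{eqn:conv_1}, \eqref{eqn:mu_rho}, and~\eqref{eqn:stop}.)

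The main obstacle I anticipate is handling the rank-deficient case cleanly: the matrix $A^TA\,K_\beta$ has a zero eigenvalue of multiplicity $d-r$, so the naive spectral-radius bound on $I - \delta\,A^TA\,K_\beta$ would give a factor of $1$, not $\mu < 1$. The fix — and the point that needs to be argued carefully — is that $g(t)$ stays in $\operatorname{range}(A^TA)$ for all $t$ (which follows because $g(t+1)$ is $A^TA$ times a vector, and on this invariant subspace $I - \delta\,A^TA\,K_\beta$ does have spectral radius $\mu$). A secondary technical point is that $K(t+1)-K_\beta$ need not be symmetric or commute with $A^TA$, so its contribution must be controlled purely by submultiplicativity of norms rather than by eigenvalue arguments; this is why the bound in~\eqref{eqn:conv_1} carries a Frobenius norm and a crude factor $\lambda_1$ rather than something tighter.
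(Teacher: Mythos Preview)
Your proposal is correct and follows essentially the same approach as the paper. The only cosmetic difference is that the paper introduces an explicit orthogonal projection $Q$ onto $\mathcal{N}(A^TA)^\perp$ and routes through $z(t)^\perp = Qz(t)$ before arriving at the gradient recursion, whereas you argue more directly that $g(t)\in\operatorname{range}(A^TA)$; since $A^TA\,Q = A^TA$ the two formulations are equivalent, and all remaining steps --- the column-wise contraction of $K(t)-K_\beta$ giving $\rho\in[\varrho,1)$, the splitting $K(t+1)=K_\beta+\widetilde K(t+1)$, the restricted spectral bound yielding $\mu\in[\mu^*,1)$, and the eventually-contracting-product argument for part~(ii) --- match the paper exactly.
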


The proof of Theorem~\ref{thm:thm1} is deferred to Section~\ref{sec:analysis}.\\




As $\rho < 1$ (see~\eqref{eqn:mu_rho}), part (i) of Theorem~\ref{thm:thm1} implies that 
\begin{align}
    \underset{t \rightarrow \infty}{\lim}\frac{\norm{g(t+1)}}{\norm{g(t)}} \leq \mu < 1. \label{eqn:conv_rate}
\end{align}
Thus, part (ii) of Theorem~\ref{thm:thm1}, in conjunction with~\eqref{eqn:conv_rate}, implies that the sequence of gradients $\{g(t)\}_{t \geq 0}$ converges linearly to $0_d$ with {\em rate of convergence} equal to $\mu$. Since $g(t)$ is linearly related to $x(t)$ as presented in~\eqref{eqn:grad_1}, linear convergence of $\{g(t)\}_{t \geq 0}$ to $0_d$ implies linear convergence of the sequence of estimators $\{x(t)\}_{t \geq 0}$ to a minimum of the aggregate cost $\sum_{i = 1}^m F^i(x)$, in other words, to a point in $X^*$.\\

{\bf Superlinear convergence}: Next, we consider the special case when $x^*$ is the unique solution for the optimization problem defined in~\eqref{eqn:opt_1}. In other words, the aggregate cost function $\sum_{i = 1}^m F^i(x)$ has a unique minimum point. In this particular case, the matrix $\A$ is full-rank, and therefore, $r=d$.
Here, we will show that Algorithm~\ref{algo_1} with parameter $\beta = 0$ converges {\em superlinearly} to the minimum point $x^*$. Recall, from~\eqref{eqn:opt_rho}, that when $\beta = 0$ then 
\[\varrho = \frac{\lambda_1 - \lambda_d}{\lambda_1 + \lambda_d} < 1.\]
Specifically, we obtain the following corollary of Theorem~\ref{thm:thm1}. 

\begin{corollary} \label{cor:super_conv}
Consider Algorithm 1 with $\beta = 0$. If $x^*$ defined by~\eqref{eqn:opt_1} is unique, and the parameter $\alpha$ satisfies the condition stated in~\eqref{eqn:alpha_delta}, then for $\delta = 1$ there exists a non-negative real value $\rho \in [\varrho, \, 1)$ such that, for each iteration $t \geq 0$, 
\begin{align}
    \norm{g(t+1)} ~ \leq ~ \lambda_1 \norm{K(0)-K_{\beta}}_F \, \rho^{t+1} \,  \norm{g(t)}. \label{eqn:zperp_7}
\end{align}
\end{corollary}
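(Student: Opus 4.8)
The plan is to obtain Corollary~\ref{cor:super_conv} as a direct specialization of Theorem~\ref{thm:thm1} to the parameter choice $\beta = 0$, using uniqueness of $x^*$ to make the relevant quantities well defined. First I would record the structural consequences of the hypotheses: since $x^*$ defined by~\eqref{eqn:opt_1} is unique, the matrix $\A$ has full column rank, so $r = d$ and $\lambda_d = \lambda_r > 0$; hence, even with $\beta = 0$, the matrix $\left(\A + \beta I\right) = \A$ is positive definite and $K_\beta = K_0 = (\A)^{-1}$ is well defined. In particular the preconditioner iterates~\eqref{eqn:kcol_update} target the exact inverse, so Algorithm~\ref{algo_1} with $\delta = 1$ is, in this regime, the distributed analogue of Newton's method.

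Next I would check that the step-size choices fall within the hypotheses of Theorem~\ref{thm:thm1}. The condition on $\alpha$ is assumed verbatim. For $\delta$, with $\beta = 0$ the admissible range in~\eqref{eqn:alpha_delta} becomes $0 < \delta < 2\left(\lambda_1/\lambda_1\right) = 2$, which contains $\delta = 1$. Crucially, the critical value in~\eqref{eqn:critical_delta}, evaluated at $\beta = 0$ with $r = d$, is
\[
\frac{2}{\frac{\lambda_1}{\lambda_1 + \beta} + \frac{\lambda_r}{\lambda_r + \beta}} = \frac{2}{1 + 1} = 1,
\]
so $\delta = 1$ is exactly the critical step-size (here $\lambda_r = \lambda_d > 0$ makes the expression meaningful). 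Therefore part (i) of Theorem~\ref{thm:thm1} applies with $\mu = \mu^*$.

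Then I would evaluate $\mu^*$ at $\beta = 0$. In~\eqref{eqn:opt_mu} the denominator term $2\lambda_1\lambda_r/\beta$ diverges as $\beta \downarrow 0$ while the numerator $\lambda_1 - \lambda_r$ stays bounded, so $\mu^* = 0$. Substituting $\mu = \mu^* = 0$, $\delta = 1$, and $K_\beta = K_0$ into~\eqref{eqn:conv_1} yields precisely~\eqref{eqn:zperp_7}, with $\rho \in [\varrho, 1)$ where $\varrho = (\lambda_1 - \lambda_d)/(\lambda_1 + \lambda_d)$ from~\eqref{eqn:opt_rho} at $\beta = 0$. To close the argument, since $\rho < 1$ the multiplicative factor $\lambda_1 \norm{K(0) - K_\beta}_F \, \rho^{t+1}$ tends to $0$, so $\lim_{t \to \infty} \norm{g(t+1)}/\norm{g(t)} = 0$; combined with part (ii) of Theorem~\ref{thm:thm1} this gives superlinear convergence of $\{g(t)\}$, and hence of $\{x(t)\}$ to $x^*$ via~\eqref{eqn:grad_1}.

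I expect the only delicate point to be the treatment of $\mu^*$ at $\beta = 0$: since~\eqref{eqn:opt_mu} literally involves division by $\beta$, one should either argue by continuity of the bound in $\beta$, or re-enter the proof of Theorem~\ref{thm:thm1} to confirm that the eigenvalue-wise estimate on the contraction factor — which at the critical $\delta$ behaves like $\max_j \left|1 - \delta\,\lambda_j/\lambda_j\right|$ when $\beta = 0$ — collapses to $0$. Everything else is bookkeeping: verifying the parameter ranges and substituting the special values into~\eqref{eqn:conv_1}.
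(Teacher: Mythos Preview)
Your proposal is correct and follows essentially the same route as the paper's own proof: both deduce the corollary by specializing Theorem~\ref{thm:thm1} to $\beta = 0$, using uniqueness of $x^*$ to get $r = d$ (so $K_\beta = (\A)^{-1}$ is well defined), checking that $\delta = 1$ is the critical step-size~\eqref{eqn:critical_delta}, and reading off $\mu^* = 0$. You are in fact more careful than the paper about the $\beta = 0$ limit in~\eqref{eqn:opt_mu}; the paper simply asserts $\mu^* = 0$ by substitution, whereas your suggestion to revisit the eigenvalue-wise bound $\max_j \mnorm{1 - \delta\lambda_j/(\lambda_j+\beta)}$ at $\beta = 0$ is the cleaner justification.
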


The proof of Corollary~\ref{cor:super_conv} is deferred to Appendix~\ref{prf:super_conv}.\\


Since $\rho < 1$, Corollary~\ref{cor:super_conv} implies that the sequence of aggregate gradients $\{g(t)\}_{t \geq 0}$ converge to $0_d$ with rate of convergence equal to 
\[\lim_{t \to \infty} \frac{\norm{g(t+1)}}{\norm{g(t)}} \leq \lim_{t \to \infty} \lambda_1 \norm{K(0)-K_{\beta}}_F \, \rho^{t+1} =  0.\]
In other words, Algorithm~\ref{algo_1} converges {\em superlinearly} to the solution $x^*$ defined in~\eqref{eqn:opt_1}.\\

In the subsequent section, we discuss comparisons between the convergence of Algorithm~\ref{algo_1} and other existing methods, when solving the considered least-squares problem in distributed server-agent settings.

\newpage
\section{Comparisons with the Existing Methods}
\label{sec:comp}

In this section, we present comparisons between the optimum (smallest) rate of convergence of Algorithm~\ref{algo_1} with the server-agent based distributed versions of the following related algorithms:
\begin{itemize}
    \item Gradient-Descent~\cite{bertsekas1989parallel},
    \item Nesterov's Accelerated Gradient-Descent~\cite{nesterov27method},
    \item Heavy-Ball Method~\cite{polyak1964some},
    \item Accelerated Projection-Consensus (APC)~\cite{azizan2019distributed},
    \item Broyden–Fletcher–Goldfarb–Shanno (BFGS)~\cite{kelley1999iterative}.
\end{itemize}
The presented theoretical comparisons are verified through experiments on real data-sets in Section~\ref{sec:exp}.

\subsection{Gradient-Descent}

Consider the gradient-descent algorithm in server-agent networks, described in Section~\ref{sub:dgd}. As we have pointed out this algorithm to be equivalent to its centralized version, both of them have identical rate of convergence. In literature, the rate of convergence for centralized gradient-descent is known only when the solution for~\eqref{eqn:opt_1} is unique~\cite{lessard2016analysis, fazlyab2018analysis}. We present below, formally in Lemma~\ref{lem:gd}, the convergence of the gradient-descent algorithm in a server-agent network for the general case. We define a parameter 
\begin{align}
    \mu_{GD} = \dfrac{\lambda_1-\lambda_r}{\lambda_1+\lambda_r}. \label{eqn:mu_def}
\end{align}

\begin{lemma} \label{lem:gd}
Consider the gradient-descent algorithm in a server-agent network as presented in Section~\ref{sub:dgd}. In~\eqref{eqn:dgd}, if $\delta \in \left(0, \frac{2}{\lambda_1}\right)$ then there exists $\mu$ with $\mu_{GD} \leq \mu < 1$ such that, for each iteration $t \geq 0$, 
\begin{align}
    \norm{g(t+1)} ~ \leq ~ \mu \,  \norm{g(t)}. \label{eqn:gd_rate}
\end{align}
\end{lemma}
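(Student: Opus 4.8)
The plan is to track the aggregate gradient $g(t)$ directly, exploiting the fact that for a least-squares cost the gradient map is affine. First I would note that $g(t) = \nabla\sum_{i=1}^m F^i(x(t)) = A^TA\,x(t) - A^TB$ and substitute the update $x(t+1) = x(t) - \delta g(t)$ to obtain the clean recursion
\begin{align*}
    g(t+1) = (I - \delta A^TA)\, g(t), \quad \forall t \geq 0.
\end{align*}
The second, and crucial, observation is that $g(t)$ never leaves the column space of $A^TA$: indeed $g(0) = A^T(Ax(0)-B) \in \mathrm{range}(A^T) = \mathrm{range}(A^TA)$, and the operator $I - \delta A^TA$ maps $\mathrm{range}(A^TA)$ into itself, so a one-line induction gives $g(t) \in \mathrm{range}(A^TA)$, equivalently $g(t) \perp \mathrm{null}(A^TA)$, for all $t$.

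Next I would diagonalize. Choose an orthonormal eigenbasis $v_1, \ldots, v_d$ of $A^TA$ with $A^TA\, v_j = \lambda_j v_j$; by~\eqref{eqn:rank_eig} the vectors $v_{r+1}, \ldots, v_d$ span $\mathrm{null}(A^TA)$, so the previous step yields $g(t) = \sum_{j=1}^r \langle g(t), v_j\rangle\, v_j$. Applying the recursion componentwise gives $\langle g(t+1), v_j\rangle = (1-\delta\lambda_j)\langle g(t), v_j\rangle$ for $j \le r$, whence
\begin{align*}
    \norm{g(t+1)}^2 = \sum_{j=1}^r (1-\delta\lambda_j)^2 \langle g(t), v_j\rangle^2 \leq \Bigl(\max_{1 \le j \le r} \mnorm{1-\delta\lambda_j}\Bigr)^2 \norm{g(t)}^2.
\end{align*}
Since $\lambda \mapsto \mnorm{1-\delta\lambda}$ is convex, its maximum over $[\lambda_r,\lambda_1]$ is attained at an endpoint, so I would set $\mu = \max\{\mnorm{1-\delta\lambda_1},\, \mnorm{1-\delta\lambda_r}\}$, which already yields~\eqref{eqn:gd_rate}.

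It remains to pin down $\mu_{GD} \le \mu < 1$. For $\mu < 1$: the hypothesis $\delta \in (0, 2/\lambda_1)$ together with $\lambda_r > 0$ gives $0 < \delta\lambda_j \le \delta\lambda_1 < 2$ for every $j \le r$, hence $\mnorm{1-\delta\lambda_j} < 1$. For the lower bound I would split on the sign of $\delta - \tfrac{2}{\lambda_1+\lambda_r}$: if $\delta \le \tfrac{2}{\lambda_1+\lambda_r}$ then $1-\delta\lambda_r \ge \tfrac{\lambda_1-\lambda_r}{\lambda_1+\lambda_r} \ge 0$, and if $\delta > \tfrac{2}{\lambda_1+\lambda_r}$ then $\delta\lambda_1 - 1 > \tfrac{\lambda_1-\lambda_r}{\lambda_1+\lambda_r}$; in either case $\mu \ge \mu_{GD}$.

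The only genuinely delicate point — and the one that makes the statement true at all — is the confinement of $g(t)$ to $\mathrm{range}(A^TA)$. Without it the relevant worst-case eigenvalue would be $\lambda_d = 0$, forcing the factor $\mnorm{1-\delta\lambda_d} = 1$ and destroying linear convergence; everything else is routine bookkeeping. A minor fact to record along the way is $\mathrm{range}(A^TA) = \mathrm{range}(A^T)$, which follows from $\mathrm{null}(A^TA) = \mathrm{null}(A)$ and the orthogonality of range and null space for the symmetric matrix $A^TA$.
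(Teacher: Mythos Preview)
Your proposal is correct and follows essentially the same approach as the paper: both arguments rest on the observation that $g(t)$ stays in $\mathrm{range}(A^TA)=\mathcal{N}(A^TA)^\perp$, then diagonalize $I-\delta A^TA$ on that subspace to obtain $\mu=\max\{\mnorm{1-\delta\lambda_1},\mnorm{1-\delta\lambda_r}\}$. The only cosmetic difference is that the paper recycles the machinery of Theorem~\ref{thm:thm1} (treating gradient descent as Algorithm~\ref{algo_1} with $K(t)\equiv I$, hence $\widetilde{K}(t)=0$, and invoking the projection $Q$), whereas you derive the recursion $g(t+1)=(I-\delta A^TA)g(t)$ and the range confinement directly; your self-contained version is slightly cleaner for a stand-alone lemma.
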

The proof of Lemma~\ref{lem:gd} is deferred to Appendix~\ref{prf:gd}.\\

We show formally below, in Theorem~\ref{thm:comp}, that Algorithm~\ref{algo_1} converges faster than the gradient-descent method in a server-agent network.
Recall that the largest and the smallest non-zero eigenvalues of the matrix $\A$ are denoted by $\lambda_1$ and $\lambda_r$.
Note that, in the special case when all the non-zero eigenvalues of the matrix $\A$ are equal, both the gradient-descent algorithm and Algorithm~\ref{algo_1} solve the optimization problem~\eqref{eqn:opt_1} in just one iteration. Now, Theorem~\ref{thm:comp} below presents the case when $\lambda_1 > \lambda_r$.

\begin{theorem} \label{thm:comp}
Consider Algorithm~\ref{algo_1}. Suppose that $\lambda_1>\lambda_r$. If $\beta > 0$ then there exists a positive finite integer $\tau$, and two positive finite real values $c$ and $r$ with $r < 1$, such that
\begin{align}
    \norm{g(t+1)} \leq c \left(r \, \mu_{GD} \right)^{t+1} \, \norm{g(0)}, \, \forall t > \tau. \label{eqn:z2_lem1}
\end{align}
\end{theorem}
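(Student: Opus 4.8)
The plan is to run Algorithm~\ref{algo_1} with the critical step-size, so that Theorem~\ref{thm:thm1} supplies the per-iteration contraction factor $\mu^{*}+\delta\lambda_{1}\norm{K(0)-K_{\beta}}_{F}\,\rho^{t+1}$, and then to exploit two facts: first, that $\mu^{*}$ is \emph{strictly} smaller than $\mu_{GD}$ whenever $\beta>0$ and $\lambda_{1}>\lambda_{r}$; and second, that the extra term $\delta\lambda_{1}\norm{K(0)-K_{\beta}}_{F}\,\rho^{t+1}$ decays geometrically to $0$, so that after finitely many iterations the contraction factor drops below a fixed fraction of $\mu_{GD}$. A telescoping of the per-iteration bound, with the (finitely many) early and possibly large factors absorbed into a constant, then yields~\eqref{eqn:z2_lem1}.

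Concretely, I would first set $\delta$ equal to the critical value in~\eqref{eqn:critical_delta}; since $\lambda_{r}/(\lambda_{r}+\beta)>0$ one checks at once that this $\delta$ satisfies $0<\delta<2(\lambda_{1}+\beta)/\lambda_{1}$, so the hypotheses of Theorem~\ref{thm:thm1} are met (the standing condition on $\alpha$ being assumed), and part~(i) of that theorem gives, for every $t\geq0$,
\begin{align*}
    \norm{g(t+1)}\leq\Big(\mu^{*}+\delta\lambda_{1}\norm{K(0)-K_{\beta}}_{F}\,\rho^{t+1}\Big)\norm{g(t)}
\end{align*}
for some $\rho\in[\varrho,1)$. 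Next I would record the elementary inequality $\mu^{*}<\mu_{GD}$: comparing~\eqref{eqn:opt_mu} and~\eqref{eqn:mu_def}, the two fractions share the same positive numerator $\lambda_{1}-\lambda_{r}$, while the denominator of $\mu^{*}$ exceeds that of $\mu_{GD}$ by $2\lambda_{1}\lambda_{r}/\beta>0$; moreover $\mu^{*}>0$ since $\lambda_{1}>\lambda_{r}\geq\lambda_{r}>0$, and $\mu_{GD}<1$ since $\lambda_{r}>0$. Hence we may fix any $r$ with $\mu^{*}/\mu_{GD}<r<1$, so that $0<\mu^{*}<r\mu_{GD}<\mu_{GD}<1$, and $r$ is a legitimate constant for the statement.

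Because $\rho<1$, the quantity $\delta\lambda_{1}\norm{K(0)-K_{\beta}}_{F}\,\rho^{t+1}$ tends to $0$, so there is a finite positive integer $\tau$ with $\delta\lambda_{1}\norm{K(0)-K_{\beta}}_{F}\,\rho^{t+1}<r\mu_{GD}-\mu^{*}$ for all $t>\tau$; for every such $t$ the per-iteration factor is therefore strictly less than $r\mu_{GD}$. Iterating the per-iteration bound from $0$ to $t$ and splitting the resulting product at $\tau$, the first $\tau+1$ factors multiply to a finite positive constant $c_{1}$ (each factor being finite and positive, as $\mu^{*}>0$), while each of the remaining $t-\tau$ factors is below $r\mu_{GD}$; this gives $\norm{g(t+1)}\leq c_{1}\,(r\mu_{GD})^{t-\tau}\,\norm{g(0)}$ for all $t>\tau$. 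Writing $(r\mu_{GD})^{t-\tau}=(r\mu_{GD})^{-(\tau+1)}(r\mu_{GD})^{t+1}$ and setting $c:=c_{1}(r\mu_{GD})^{-(\tau+1)}$, which is positive and finite because $\mu_{GD}>0$, converts the bound into exactly~\eqref{eqn:z2_lem1}.

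There is no deep obstacle here; the only point requiring care is that the early factors $\mu^{*}+\delta\lambda_{1}\norm{K(0)-K_{\beta}}_{F}\,\rho^{s+1}$ for $s\leq\tau$ may exceed $1$ when $K(0)$ is far from $K_{\beta}$, so they cannot be bounded by $r\mu_{GD}$ and must instead be gathered into the finite prefactor $c_{1}$ — this is precisely why the statement asserts the bound only for $t>\tau$ and permits $c$ to be an arbitrary finite constant. One should also note that $\tau$, $c$, and $r$ depend on $\beta$ (through $\mu^{*}$, $\rho$, $K_{\beta}$, and the critical $\delta$) and on $K(0)$, but the theorem asks only for their existence and finiteness.
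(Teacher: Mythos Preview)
Your proof is correct and follows essentially the same route as the paper's: both invoke Theorem~\ref{thm:thm1} with the critical $\delta$ to obtain the per-iteration factor $\mu^{*}+\delta\lambda_{1}\norm{K(0)-K_{\beta}}_{F}\rho^{t+1}$, observe that $\mu^{*}<\mu_{GD}$ when $\beta>0$ and $\lambda_{1}>\lambda_{r}$, wait finitely many steps until the factor drops below a fixed multiple $r\mu_{GD}$ of the gradient-descent rate, and then telescope, absorbing the early factors into the constant $c$. The only cosmetic difference is that the paper packages the telescoping step as a separate general lemma (Lemma~\ref{lem:tau}) and chooses $\tau$ first via Fact~\ref{fct:seq} before setting $r=\max_{t>\tau}\alpha_{t}/\mu_{GD}$, whereas you fix $r\in(\mu^{*}/\mu_{GD},1)$ first and then determine $\tau$; both orderings are valid and yield the same conclusion.
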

The proof of Theorem~\ref{thm:comp} is deferred to Appendix~\ref{prf:comp}.\\

Now, consider the best possible rate of convergence for the gradient-descent algorithm in a server-agent network. That is, substitute $\mu = \mu_{GD}$ in Lemma~\ref{lem:gd}. In that case, we obtain the following upper bound on the gradients' norms for the gradient-descent algorithm in a server-agent network:
\begin{align}
    \norm{g(t+1)} \leq \left(\mu_{GD} \right)^{t+1} \, \norm{g(0)}, \, \forall t > 0. \label{eqn:gd_recur}
\end{align}
From Theorem~\ref{thm:comp}, we have an upper bound on the gradients' norm for Algorithm~\ref{algo_1} given by
\[\norm{g(t+1)} \leq c \left(r \, \mu_{GD} \right)^{t+1} \, \norm{g(0)}, \, \forall t > \tau.\]
Assuming that both the algorithms are identically initialized with some $x(0)$, we compare the ratio between the upper bounds on gradients for these algorithms, using~\eqref{eqn:z2_lem1} and~\eqref{eqn:gd_recur}.
We can see that there exists a finite integer $T$ such that
\[c \left(r \, \mu_{GD} \right)^{t+1} < \left(\mu_{GD} \right)^{t+1} \quad \forall t > T,\]
because $r < 1$. 
Alternately speaking, the ratio between the upper bounds on the gradients of Algorithm~\ref{algo_1} and gradient-descent in server-agent network is given by $c~r^{t+1}$ for iteration $t> T$, where $r<1$.
This statement implies that even though Algorithm~\ref{algo_1} might be initially slower than the gradient-descent algorithm in a server-agent network, after a finite number of iterations  Algorithm~\ref{algo_1} is guaranteed to have a smaller error bound compared to the gradient-descent in a server-agent network with identical initialization of $x(0)$ and arbitrary initialization of the iterative pre-conditioning matrix $K(0)$. More importantly, this error bound of Algorithm~\ref{algo_1} decreases to zero at an {\em exponentially faster} rate compared to the latter one.

\subsection{Nesterov's Accelerated Gradient-Descent}
\label{sub:nag}
In this subsection, we describe Nesterov's accelerated gradient-descent method in a server-agent network.
In this method, in addition to the estimate $x(t)$ of a minimum point, the server also maintains a {\em memory vector} denoted by the vector $y(t)$ for each iteration $t \in \{0, \, 1, \ldots\}$. The initial estimate $x(0)$ and initial memory vector $y(0)$ are chosen arbitrarily from $\R^d$. Also, before initiating the iterations, the server chooses two non-negative scalar parameters $\delta$ and $\eta$. In each iteration $t$, upon receiving the estimate $x(t)$ from the server, each agent computes the local gradient $g^i(t)$ defined by~\eqref{eqn:g_i} and sends it back to the server. The server, upon receiving the local gradients from all the agents, updates the memory vector $y(t)$ and the current estimate $x(t)$ as follows:
\begin{align}
    y(t+1) & = x(t) - \delta \sum_{i=1}^m g^i(t), \label{eqn:nag_1} \\
    x(t+1) & = (1+\eta) y(t+1) - \eta y(t). \label{eqn:nag_2}
\end{align}
As the actual gradient of the aggregate cost function $\sum_{i = 1}^n F^i(x)$ for each iteration $t$ is equal to the sum of all agents' gradients (see~\eqref{eqn:sum_grads}), the update pair~\eqref{eqn:nag_1}-\eqref{eqn:nag_2} above is equivalent to the centralized Nesterov's accelerated gradient-descent method~\cite{nesterov27method}. Thus, the convergence of the above implementation of the Nesterov's accelerated gradient-descent method in the server-agent network is equivalent to its centralized version presented in~\cite{nesterov27method}. The rate of convergence of the centralized Nesterov's accelerated gradient-descent method is known explicitly only for the special case when the optimization problem~\eqref{eqn:opt_1} has a unique solution~\cite{lessard2016analysis,fazlyab2018analysis}. For the particular case when~\eqref{eqn:opt_1} has a unique solution the Nesterov's accelerated gradient-descent method converges {\em linearly} with provably smaller rate of convergence than that of the traditional gradient-descent method. On the other hand, we have shown, in Corollary~\ref{cor:super_conv}, that Algorithm~\ref{algo_1} converges {\em superlinearly} when~\eqref{eqn:opt_1} has a unique solution.

\subsection{Heavy-Ball Method}
\label{sub:hbm}

Here, we describe the heavy-ball method in a server-agent network. In this method, instead of the memory vector as in Nesterov's accelerated gradient-descent, the server maintains a {\em momentum vector} which is denoted by $w(t)$ for each iteration $t \in \{0, \, 1, \ldots\}$. The initial estimate $x(0)$ and initial momentum vector $w(0)$ are chosen arbitrarily from $\R^d$. Also, before initiating the iterations, the server chooses two non-negative scalar parameters $\delta$ and $\eta$.
The local gradient $g^i(t)$ is computed by each agent $i$ as before. This algorithm is somewhat similar to the Nesterov's accelerated gradient-descent method in a server-agent network described earlier, except that the update equations at the server are different. Here, the server updates the momentum vector $w(t)$ and the current estimate $x(t)$ according to:
\begin{align}
    w(t+1) & = \eta w(t) + \sum_{i=1}^m g^i(t), \label{eqn:hbm_1} \\
    x(t+1) & = x(t) - \delta w(t+1). \label{eqn:hbm_2}
\end{align}
Following a similar argument as in Section~\ref{sub:nag}, we can conclude that the convergence of the above implementation~\eqref{eqn:hbm_1}-\eqref{eqn:hbm_2} of the heavy-ball method in the server-agent network is equivalent to the centralized heavy-ball method described in~\cite{polyak1964some}. Again, the explicit rate of convergence of the centralized heavy-ball method is known only for the special case of~\eqref{eqn:opt_1} having a unique solution. In that special case, the heavy-ball method has a linear rate of convergence~\cite{lessard2016analysis} that is provably smaller than the Nesterov's accelerated gradient-descent method. Whereas, we have shown in Corollary~\ref{cor:super_conv} that Algorithm~\ref{algo_1} converges {\em superlinearly} when~\eqref{eqn:opt_1} has a unique solution.

\subsection{Accelerated Projection-Consensus}
\label{sub:apc}

The accelerated projection-based consensus (APC) algorithm is applicable to a special case of the least-squares problem~\eqref{eqn:opt_1} when the collective algebraic equations $Ax=B$ has a unique solution. In addition, all the local data matrices $A^i$ needs to be full row-rank. Here, each agent $i$ maintains a local estimate of the minimum point, denoted by $x^i(t)$, and the server maintains a global estimate denoted by $x(t)$. Before initiating the iterations, the server chooses two non-negative scalar parameters $\eta$ and $\gamma \leq 2$ and communicates the parameter $\gamma$ to all the agents. Additionally, each agent computes its projection matrix onto the nullspace of the matrix $A^i$ as
\[P^i = I - (A^i)^T \left(A^i (A^i)^T\right)^{-1}A^i.\]
Based on its data pair $(A^i,B^i)$ , each agent $i$ initializes its local estimate $x^i(0)$ as one of the solutions of $A^ix=B^i$ and sends it to the server. The server computes the average initialized local estimate of all the agents and sets the global initial estimate of the minimum point as
\[x(0) = \frac{1}{m} \sum_{i=1}^m x^i(0).\]
Then, at each iteration $t \in \{0, \, 1, \ldots\}$, each agent receives the current global estimate $x(t)$ from the server and updates its local estimate $x^i(t)$ according to
\[x^i(t+1) = x^i(t) + \gamma P^i \left (x(t)-x^i(t)\right).\]
The server receives the updated local estimates from all the agents and updates the current global estimate $x(t)$ as follows:
\[x(t+1) = \frac{\eta}{m} \sum_{i=1}^m x^i(t) + (1-\eta)x(t).\]
This algorithm has been shown to converge to the minimum point {\em linearly} and speculated to be faster than the heavy-ball method described above. However, we have shown that Algorithm~\ref{algo_1} converges {\em superlinearly} in this particular case of~\eqref{eqn:opt_1} where the APC method is applicable.

\subsection{Broyden–Fletcher–Goldfarb–Shanno (BFGS)}
\label{sub:bfgs}

In this subsection we present the algorithm due to Broyden–Fletcher–Goldfarb–Shanno, popularly known as BFGS method, when applied in a sever-agent network. Note that, this method is only applicable for the special case when the least-squares problem~\eqref{eqn:opt_1} has a unique solution. The BFGS method is a {\em quasi-Newton} iterative method in which the server maintains a square matrix $M(t)$ of dimension $d\times d$ that approximates the Hessian matrix of the aggregate cost function $\sum_{i = 1}^mF^i(x)$ which is equal to $(A^T A)^{-1}$. The initial estimate $x(0)$ is chosen arbitrarily, whereas the matrix $M(0)$ is initialized as any non-singular matrix of appropriate dimension. In addition, the server selects a stepsize parameter $\eta(t)$ for each iteration $t$ using a line search method~\cite{kelley1999iterative}. As usual, each agent receives the current estimate $x(t)$ from the server and computes the gradient $g^i(t)$ as per~\eqref{eqn:g_i}. The server accumulates all the agents' gradients and performs the following two steps at each iteration $t$.
\begin{itemize}
    \item Obtain a vector $s(t)$ by solving the equations 
    $$M(t)\,s(t) = - \sum_{i=1}^m g^i(t).$$
    \item Update the estimate as 
    $$x(t+1) = x(t) + \eta(t)s(t).$$
\end{itemize}
During the same iteration, the server broadcasts the updated estimate $x(t+1)$ to all the agents. Each agent then computes the gradient $g^i(t+1)$ and sends it back to the server. Next, the server updates the approximate Hessian matrix as follows:
\begin{align*}
    y(t) & = \sum_{i=1}^m g^i(t+1) - \sum_{i=1}^m g^i(t), \\
    M(t+1) & = M(t) + \frac{y(t)y(t)^T}{\eta(t) y(t)^T s(t)} - \frac{M(t)s(t)s(t)^T M(t)^T}{s(t)^T M(t) s(t)}. 
\end{align*}
Following a similar argument as in Section~\ref{sub:nag}, we can conclude that the BFGS method in a server-agent network is equivalent to its centralized version in~\cite{kelley1999iterative}. Again, the rate of convergence of the centralized BFGS is known to be superlinear~\cite{kelley1999iterative}. By equivalence of these algorithms, the BFGS method in a server-agent network has superlinear rate of convergence only for unique solution of Algorithm~\ref{algo_1}.

\subsection{Conclusion}

We first consider the special case when the solution for the distributed least-squares problem~\eqref{eqn:opt_1} is unique, i.e, the global cost function $\sum_{i = 1}^m F_i(x)$ is strongly convex. As shown above in Section~\ref{sub:conv}, Algorithm~\ref{algo_1} converges {\em superlinearly} in this case to the minimum point. However, the server-agent versions of the algorithms gradient-descent, Nesterov's accelerated gradient-descent, heavy-ball method, and the APC method can only converge {\em linearly} to the minimum point. \\

Next, we consider the more general case when the solution for the distributed least-squares problem~\eqref{eqn:opt_1} is not unique. We note that the rates of convergence of the server-agent versions of the gradient-descent, Nesterov's accelerated gradient-descent, heavy-ball method, BFGS, and the APC method, are known only for the case when~\eqref{eqn:opt_1} has a unique solution, as have been discussed in this section. We formally presented the convergence of the gradient-descent method in a server-agent network for the general case, which has helped us in showing that Algorithm~\ref{algo_1} converges exponentially faster than gradient-descent in the general case where the aggregate cost is non-strongly convex.



\newpage
\section{Extension to Convex Quadratic Costs}
\label{sec:quad}

We have considered the multi-agent distributed linear least-squares problem~\eqref{eqn:opt_1} where each agent's local cost function $F^i(x)$ is in squared form and they collaborate with a server to minimize the aggregate cost function $\sum_{i=1}^m F^i(x)$. This section considers a more general case of each agent's cost function being quadratic and convex.
\\

We consider, again, a server-agent based distributed system of $m$ agents and a server where each agent holds a $(d \times d)$-dimensional real valued matrix $P^i$, a $d$-dimensional real valued column vector $q^i$ and a scalar real number $r^i$. Then, for each agent $i$, $P^i \in \R^{d \times d}$, $q^i \in \R^d$ and $r^i\in \R$. The agents aim to compute a parameter vector $x^* \in \R^d$ such that
\begin{align}
    x^* \in X^* = \arg \min_{x \in \R^d} \sum_{i = 1}^m \left(\frac{1}{2}x^T P^i x - x^T q^i + r^i \right). \label{eqn:opt_2}
\end{align}
Here, each agent's local cost is given by
\begin{align}
    F^i(x) = \frac{1}{2}x^T P^i x - x^T q^i + r^i, \quad \forall x \in \R^d,  \label{eqn:quad_local}
\end{align}
which is quadratic in its argument. By defining the notations
\[P = \sum_{i = 1}^m P^i, \quad q = \sum_{i = 1}^m q^i, \quad r = \sum_{i = 1}^m r^i,\]
the aggregate cost function in~\eqref{eqn:opt_2} can be rewritten as $\sum_{i=1}^m F^i(x)$ where 
\begin{align}
    \sum_{i=1}^m F^i(x) = \frac{1}{2}x^T P x - x^T q + r, \quad \forall x \in \R^d.  \label{eqn:quad_agg}
\end{align}
In this section, we only consider the case where the aggregate cost in the quadratic minimization problem~\eqref{eqn:opt_2} is convex. Since the Hessian of $\sum_{i=1}^m F^i(x)$ is the constant matrix $P$, the aggregate cost in~\eqref{eqn:opt_2} is convex if and only if the matrix $P$ is positive semi-definite. For the remainder of this section, we will assume that the matrix $P$ is positive semi-definite.
\\

Here we argue that the convex quadratic problem~\eqref{eqn:opt_2} can be seen as a special case of the least-squares problem~\eqref{eqn:opt_1}. In order to see this, we expand the local cost in~\eqref{eqn:opt_1} as follows:
\begin{align}
    \frac{1}{2}\norm{A^i x - B^i}^2 = \frac{1}{2}x^T (A^i)^T A^i x - x^T (A^i)^T B^i + \frac{1}{2} (B^i)^T B^i, \quad \forall x \in \R^d.  \label{eqn:loc_expand}
\end{align}
Comparing~\eqref{eqn:quad_local} and~\eqref{eqn:loc_expand}, we can recover the cost function in~\eqref{eqn:opt_2} from that of~\eqref{eqn:opt_1} by setting
\begin{align}
    (A^i)^T A^i = P^i, \quad (A^i)^T B^i = q^i, \quad \frac{1}{2}(B^i)^T B^i = r^i, \quad i=1,\ldots,m. \label{eqn:par_set}
\end{align}
Hence, if the matrix $P$ is positive semi-definite, then the convex quadratic problem~\eqref{eqn:opt_2} is a special case of~\eqref{eqn:opt_1}, with the parameter replacements mentioned in~\eqref{eqn:par_set}.
A particular case of interest is the matrix $P=\sum_{i = 1}^m P^i$ being positive definite in~\eqref{eqn:opt_2}. In this case, the aggregate cost $\sum_{i=1}^m F^i(x)$ in~\eqref{eqn:quad_agg} is strictly convex, and hence the optimization problem~\eqref{eqn:opt_2} has a unique solution.
\\

We propose to solve the convex quadratic problem~\eqref{eqn:opt_2} in server-agent network using Algorithm~\ref{algo_1} with the parameter replacements in~\eqref{eqn:par_set}.
Owing to the above discussion, all of the convergence guarantees of Algorithm~\ref{algo_1} (see Section~\ref{sub:conv}) and its theoretical comparison with related algorithms (see Section~\ref{sec:comp}) for the linear least-squares problem~\eqref{eqn:opt_1}, specifically Theorem~\ref{thm:thm1}, Corollary~\ref{cor:super_conv} and Theorem~\ref{thm:comp}, are exactly applicable to the general convex quadratic problem~\eqref{eqn:opt_2}.

\newpage

\section{Proof of Theorem~\ref{thm:thm1}}
\label{sec:analysis}

In this section, we present the proof for Theorem~\ref{thm:thm1}. Throughout this section we assume that $\A$ is not the trivial zero matrix, $\beta > 0$, and 
\begin{align}
    0 < \alpha < \frac{2}{\lambda_1+\beta}. \label{eqn:alpha_bnd}
\end{align}

The proof relies on the following lemma, Lemma~\ref{lem:k}, which shows the {\em linear} convergence of the sequence of matrices $\{K(t), ~ t= 0, \, 1, \ldots \}$ to $K_{\beta}$. Let $k_{j\beta}$ denote the $j$-th column of matrix $K_{\beta}$ where $j=1, \ldots, \, d$, and recall the definition of $\varrho$ from~\eqref{eqn:opt_rho}.

\begin{lemma} \label{lem:k}
Consider Algorithm~\ref{algo_1}. If $\alpha \in \left(0,\frac{2}{\lambda_1+\beta}\right)$ then there exists $\rho$ with $\varrho \leq \rho < 1$ such that for each $j \in \{1, \ldots, \, d\}$,
\begin{align*}
    \norm{k_j(t+1)-k_{j\beta}} \leq \rho \norm{k_j(t)-k_{j\beta}}, \quad \forall t \in \{0, \, 1, \ldots \}.
\end{align*}
\end{lemma}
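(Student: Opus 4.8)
The plan is to observe that the per-column update \eqref{eqn:kcol_update}, once the agent contributions \eqref{eqn:Rij} are summed, is nothing but an affine fixed-point iteration whose transition matrix is $I - \alpha(\A + \beta I)$ and whose fixed point is exactly the $j$-th column $k_{j\beta}$ of $K_{\beta}$. Linear convergence then falls out of a spectral-norm bound on this symmetric matrix, and the sharp constant $\varrho$ comes from a standard minimax computation over the step-size $\alpha$.

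\textbf{Step 1: collapse the update.} First I would use the block structure of $A$ in \eqref{eqn:data_matrix} to write $\A = \sum_{i=1}^m (A^i)^T A^i$, so that summing \eqref{eqn:Rij} over $i$ gives $\sum_{i=1}^m R^i_j(t) = (\A + \beta I)\,k_j(t) - e_j$. Substituting into \eqref{eqn:kcol_update} yields
\[
k_j(t+1) = \bigl(I - \alpha(\A + \beta I)\bigr)\,k_j(t) + \alpha\, e_j .
\]
Next, since $K_{\beta} = (\A+\beta I)^{-1}$, its column $k_{j\beta}$ satisfies $(\A+\beta I)k_{j\beta} = e_j$, hence $k_{j\beta} = \bigl(I - \alpha(\A+\beta I)\bigr)k_{j\beta} + \alpha e_j$, i.e. $k_{j\beta}$ is a fixed point of the same iteration. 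Subtracting the two identities cancels the affine term and gives the error recursion
\[
k_j(t+1) - k_{j\beta} = \bigl(I - \alpha(\A + \beta I)\bigr)\,\bigl(k_j(t) - k_{j\beta}\bigr),
\]
so $\norm{k_j(t+1)-k_{j\beta}} \le \norm{I - \alpha(\A+\beta I)}_2 \, \norm{k_j(t)-k_{j\beta}}$, with a bound independent of $j$.

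\textbf{Step 2: bound the contraction factor.} I would set $\rho := \norm{I - \alpha(\A+\beta I)}_2$. As $\A$ is symmetric positive semidefinite with eigenvalues $\lambda_1 \ge \dots \ge \lambda_d \ge 0$, the matrix $I - \alpha(\A+\beta I)$ is symmetric with eigenvalues $1 - \alpha(\lambda_i+\beta)$, so $\rho = \max_{1\le i\le d}\lvert 1-\alpha(\lambda_i+\beta)\rvert$. The hypothesis $0 < \alpha < 2/(\lambda_1+\beta)$ gives $-1 < 1-\alpha(\lambda_1+\beta) \le 1-\alpha(\lambda_i+\beta) \le 1-\alpha(\lambda_d+\beta) < 1$, where the last strict inequality uses $\beta > 0$ (hence $\lambda_d+\beta>0$ even when $A$ is rank-deficient); therefore $\rho < 1$. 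Finally, the maximum defining $\rho$ is attained at the two extreme indices $i=1$ and $i=d$, and the equioscillation value $\min_{\alpha>0}\max\{\lvert 1-\alpha(\lambda_1+\beta)\rvert,\lvert 1-\alpha(\lambda_d+\beta)\rvert\}$ is reached at $\alpha^\star = 2/(\lambda_1+\lambda_d+2\beta)$ and equals $(\lambda_1-\lambda_d)/(\lambda_1+\lambda_d+2\beta) = \varrho$; so $\varrho \le \rho < 1$ for every admissible $\alpha$, with equality at $\alpha^\star$. This is precisely the claim of the lemma.

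\textbf{Main obstacle.} There is no real obstacle here: the argument is elementary once the iteration is put in fixed-point form. The only points that need care are confirming $\sum_i (A^i)^T A^i = \A$, and checking that the lower bound $\varrho$ — which involves $\lambda_d$, possibly zero — is still a valid ($<1$) contraction factor; this is exactly where $\beta > 0$ is used, ensuring $\lambda_d + \beta > 0$ and hence $\rho < 1$.
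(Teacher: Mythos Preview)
Your proposal is correct and follows essentially the same route as the paper's proof: both collapse $\sum_i R^i_j(t)$ into $(\A+\beta I)k_j(t)-e_j$, identify $k_{j\beta}$ as the fixed point of the resulting affine iteration, and deduce the contraction from the spectral norm of $I-\alpha(\A+\beta I)$. Your treatment of the lower bound $\varrho$ is actually more self-contained than the paper's, which simply cites the standard condition-number formula for the optimal gradient-descent rate.
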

The proof of Lemma~\ref{lem:k} is deferred to Appendix~\ref{prf:k}.\\






Now, for each $i$ and $t$, upon substituting $g^i(t)$ from~\eqref{eqn:g_i} in~\eqref{eqn:grad_1} we obtain that
\begin{align}
    g(t) = \sum_{i = 1}^m g^i(t) =  \left( \sum_{i=1}^m (A^i)^T A^i \right) x(t) - \left( \sum_{i=1}^m (A^i)^T B^i \right). \label{eqn:grad_2}
\end{align}
As $A^T A = \sum_{i=1}^m (A^i)^T A^i$ and $A^T B = \sum_{i=1}^m (A^i)^T B^i$,~\eqref{eqn:grad_2} implies that
\begin{align}
    g(t) = \sum_{i = 1}^m g^i(t) = A^T (Ax(t)-B). \label{eqn:grad}
\end{align}
Recall the definition of $X^*$ from~\eqref{eqn:opt_1}. Due to~\eqref{eqn:grad},
\begin{align}
    X^* = \left\{ x \in \R^d ~ : ~ A^T (A x - B) = 0_d\right\}. \label{eqn:new_X*}
\end{align}
Consider an arbitrary point $x^* \in X^*$. Define 
\begin{align}
    z(t) = x(t) - x^*. \label{eqn:zt}
\end{align}
As $A^T (A x^* - B) = 0_d$ (due to~\eqref{eqn:new_X*}), upon substituting from~\eqref{eqn:zt} in~\eqref{eqn:grad} we obtain that
\begin{align}
    g(t) = \A \, z(t). \label{eqn:grad_zt}
\end{align}
Let $\N(\A)$ denote the nullspace of matrix $\A$:
\[\N(\A) = \left\{x \in \R^d ~: ~ \A x = 0_d \right\}.\]
Let $\mathcal{N}(\A)^{\perp}$ denote the orthogonal vector space of $\N(\A)$:
\[\mathcal{N}(\A)^{\perp} = \left\{x \in \R^n ~ : ~ x^T y = 0, ~ \forall y \in \N(\A) \right\}.\]
Due to the fundamental theorem of linear algebra~\cite{horn2012matrix}, $\R^n = \mathcal{N}(\A) \oplus \mathcal{N}(\A)^{\perp}$. Therefore, for each $t \geq 0$, we can decompose vector $z(t)$ into two orthogonal vectors $z(t)^{\perp}$ and $z(t)^{\N}$, such that $z(t)^{\perp} \in \mathcal{N}(\A)^{\perp}$ and $z(t)^{\N} \in \N(\A)$. Specifically, for each $t \geq 0$,
\begin{align}
    z(t) = z(t)^{\mathcal{N}} + z(t)^{\perp}. \label{eqn:decomp}
\end{align}
As $\A \, z(t)^{\N} = 0_d$, upon substituting from~\eqref{eqn:decomp} in~\eqref{eqn:grad_zt} we obtain that
\begin{align}
    g(t) = \A \, z(t)^{\perp}, \quad \forall t \geq 0. \label{eqn:grad_vs_err}
\end{align}
The remainder of the proof is divided into three steps. In the first two steps, we will prove part (i) of the theorem. In the last step, we will prove part (ii). \\


{\bf Step I:} For each iteration $t$, let $\widetilde{K}(t)$ denote the matrix obtained by stacking the column vectors $\widetilde{k}_1(t), \ldots, \,  \widetilde{k}_d(t)$. Specifically, 
\begin{align}
    \widetilde{K}(t) = \left[\widetilde{k}_1(t), \ldots, \, \widetilde{k}_d(t) \right] = K(t) - K_{\beta}, \quad \forall t \geq 0. \label{eqn:tilde_k}
\end{align}
In this step, we will show that 
\begin{align}
    g(t+1) = \left(I - \delta \A Q \, K_{\beta} \right)\, g(t) - \delta \A Q \widetilde{K}(t+1) \, g(t), \label{eqn:gt_dyn_1}
\end{align}
for an appropriate projection matrix $Q$ defined later in this step.\\

Upon substituting from~\eqref{eqn:grad} in~\eqref{eqn:x_update} we obtain that
\begin{align}
    x(t+1) = x(t) - \delta K(t+1) A^T\left(Ax(t)- B \right), \quad \forall t \geq 0. \label{eqn:x_central}
\end{align}
Upon substituting from~\eqref{eqn:zt} in~\eqref{eqn:x_central}, due to~\eqref{eqn:new_X*} we obtain that
\begin{align}
    z(t+1) = \left(I- \delta K(t+1)\A\right) \, z(t). \label{eqn:z_mltp}
\end{align}
Being a symmetric positive semi-definite matrix, $\A$ has real non-negative eigenvalues and a corresponding set of $d$ orthonormal eigenvectors. Recall that $\lambda_1, \ldots, \, \lambda_d$ denote the eigenvalues of $\A$ such that $\lambda_1 \geq \, \ldots \, \geq \lambda_d \geq 0$. We denote by $Diag(.)$ a diagonal matrix of appropriate dimensions, with the arguments denoting the diagonal elements of the matrix in the same order.
Let $S = Diag(\lambda_1, \ldots \,,\lambda_d)$, and let the matrix $V$ consists of the corresponding orthonormal eigenvectors $[V_1, \ldots, \, V_d]$ such that 
\[\A V_j = \lambda_j \, V_j.\]
Note that $V_i^T V_i = 1$ and $V^T_i V_j = 0$ for all $i \neq j$. Then $V^T V = I$, and
\begin{align}
    \A = V S V^T. \label{eqn:svd}
\end{align}
Recall that $r$ denotes the rank of matrix $\A$. In general, $1 \leq r  \leq d$. If $r < d$ then 
\[\lambda_1 \geq \, \ldots \, \geq \lambda_r > \lambda_{r+1} = \, \ldots \, = \lambda_d = 0.\]
If $r = d$ then all the eigenvalues of $\A$ are positive. Thus, 
\begin{align}
    S = Diag\left( \lambda_1, \ldots, \lambda_r, \, \underbrace{0, \ldots, 0}_{d-r}\right). \label{eqn:diag_S}
\end{align}

Let $span\left\{V_1, \ldots, \, V_r \right\}$ denote the vector space spanned by the orthonormal eigenvectors  $V_1, \ldots, \, V_r$: 
\[span\left\{V_1, \ldots, \, V_r \right\} = \left\{ \sum_{i = 1}^r u_i V_i ~ : ~ u_i \in \R, ~ \forall i \right\}.\]
As the eigenvectors $V_1, \ldots, \, V_d$ are orthogonal~\cite{horn2012matrix}, 
\begin{align}
\begin{split}
    \N(\A)^{\perp} & = span\left\{ V_1, \ldots, V_r \right\}, \text{ and } \\ \label{eqn:orth_space_r}
    \N(\A) & = span\left\{ V_{r+1}, \ldots, V_d \right\}.
\end{split}
\end{align}
Let,
\[ S^{\perp} = Diag\left( \underbrace{1,\dots, 1}_r,  \, \underbrace{0, \ldots, 0}_{d-r} \right). \]
Define a {\em projection matrix} 
\begin{align}
    Q = V S^{\perp} V^T.  \label{eqn:def_Q}
\end{align}
Note that for a vector $v \in \R^d$, due to the fundamental lemma of linear algebra, the vectors $Q v$ and $(v - Qv)$ belong to the orthogonal vector spaces $\N(\A)^{\perp}$ and $\N(\A)$, respectively (cf.~\eqref{eqn:orth_space_r}). Thus, from the definition of $z(t)^{\perp}$ in~\eqref{eqn:decomp}, for all $t$,
\begin{align}
    z(t)^{\perp} = Q z(t). \label{eqn:z_per_Q}
\end{align}
This implies that, for all $t \geq 0$,
\begin{align}
    z(t+1)^{\perp}  = Q \, z(t+1) & \overset{\eqref{eqn:z_mltp}}{=} Q \, \left(I - \delta K(t+1)\A  \right) \,z(t) \overset{\eqref{eqn:z_per_Q}}{=}  z(t)^{\perp} - \delta Q K(t+1)\A \, z(t) \nonumber \\
    & \overset{\eqref{eqn:decomp}}{=} z(t)^{\perp} - \delta Q K(t+1)\A \, \left(z(t)^{\mathcal{N}} + z(t)^{\perp} \right). \label{eqn:zt_dyn} 
\end{align}
As $z(t)^{\mathcal{N}} \in {\mathcal{N}}(\A)$, $\A \, z(t)^{\mathcal{N}} = 0_d$. Upon substituting this in~\eqref{eqn:zt_dyn} we obtain that 
\begin{align}
    z(t+1)^{\perp} = z(t)^{\perp} - \delta Q K(t+1)\A \, z(t)^{\perp}, \quad \forall t. \label{eqn:zt_dyn_2}
\end{align}
Substituting from~\eqref{eqn:grad_vs_err} in~\eqref{eqn:zt_dyn_2} we obtain that
\begin{align}
    z(t+1)^{\perp} = z(t)^{\perp} - \delta Q K(t+1) \, g(t). \label{eqn:zt_dyn_3}
\end{align}
Multiplying both sides of~\eqref{eqn:zt_dyn_3} with $\A$, and substituting again from~\eqref{eqn:grad_vs_err}, we obtain that
\begin{align}
    g(t+1) = g(t) - \delta \, \A Q \, K(t+1) \, g(t). \label{eqn:zt_dyn_4}
\end{align}
Finally, substituting from~\eqref{eqn:tilde_k} in~\eqref{eqn:zt_dyn_4} proves~\eqref{eqn:gt_dyn_1}, which is
\[g(t+1) = \left(I - \delta \A Q \, K_{\beta} \right)\, g(t) - \delta \A Q \widetilde{K}(t+1) \, g(t), ~ ~ \forall t \geq 0.\]
~\\

{\bf Step II:} Using triangle inequality in~\eqref{eqn:gt_dyn_1} we obtain that 
\begin{align}
    \norm{g(t+1)} \leq \norm{\left(I -  \delta \A Q \, K_{\beta} \right) \, g(t)} + \delta \norm{\A Q \widetilde{K}(t+1) g(t)}. \label{eqn:zperp_1}
\end{align}
In this step, we will show part (i) of the theorem, which states that for all $t \geq 0$,
\begin{align*}
    \norm{g(t+1)} \leq \left( \mu + \delta \lambda_1 \norm{\widetilde{K}(0)}_F \, \rho^{t+1} \right) \, \norm{g(t)}.
\end{align*}

First, we will derive an upper bound on the second term in~\eqref{eqn:zperp_1}.
Recall the definition of $\widetilde{K}(t)$ from~\eqref{eqn:tilde_k}. Due to Lemma~\ref{lem:k}, for each $j \in \{1, \ldots, \, d\}$ we obtain that
\begin{align}
    \norm{\widetilde{k}_j(t)}^2 \leq \rho^{2t} \norm{\widetilde{k}_j(0)}^2. \label{eqn:norm_tilde_K}
\end{align}
Note that 
\begin{align}
    \norm{\widetilde{K}(t)}_F^2 = \sum_{j=1}^d \norm{\widetilde{k}_j(t)}^2. \label{eqn:norm_frob_K}
\end{align}
Upon substituting from~\eqref{eqn:norm_tilde_K} above, and applying square-roots on both sides, we obtain that for all $t \geq 0$,
\begin{align}
    \norm{\widetilde{K}(t)}_F \leq \rho^{t} \norm{\widetilde{K}(0)}_F. \label{eqn:K_frob_0}
\end{align}
For any square matrix $M$, let $\norm{M}$ denote the {\em induced 2-norm} of the matrix. Then~\cite{horn2012matrix}, 
\begin{align}
    \norm{M} \leq \norm{M}_F. \label{eqn:matrix_norms}
\end{align}
Substituting from~\eqref{eqn:K_frob_0} in~\eqref{eqn:matrix_norms} we obtain that, for all $t \geq 0$,
\begin{align}
    \norm{\widetilde{K}(t)} \leq \rho^{t} \norm{\widetilde{K}(0)}_F. \label{eqn:K_frob}
\end{align}
Recall that $V$ is a unitary matrix. So, $V^T V = I$. Thus, due to~\eqref{eqn:svd} and~\eqref{eqn:def_Q}, 
\begin{align*}
    \A Q = V \, \left( S S^{\perp} \right) \, V^T.
\end{align*}
Recall that $\lambda_1$ denotes the largest eigenvalue of $\A$. Thus, the largest element in the diagonal matrix $S S^{\perp}$ has value equal to $\lambda_1$. Therefore~\cite{horn2012matrix},
\begin{align}
    \norm{\A Q} = \lambda_1. \label{eqn:aQ_lambda}
\end{align}
From the definition of induced 2-norm~\cite{horn2012matrix}, for any vector $v \in \R^d$ and a $(d \times d)$ real-valued matrix $M$,
\begin{align*}
    \norm{M v} \leq \norm{M} \, \norm{v}. 
\end{align*}
Therefore,  
\begin{align*}
    \norm{\delta \A \, Q \widetilde{K}(t+1)g(t)} \leq \delta \norm{\A Q} \norm{\widetilde{K}(t+1)} \norm{g(t)} .
\end{align*}
Substituting from~\eqref{eqn:K_frob} and~\eqref{eqn:aQ_lambda} above we obtain that, for all $t$,
\begin{align}
    \norm{\delta \A \, Q \widetilde{K}(t+1)g(t)} \leq \delta \lambda_1 \norm{\widetilde{K}(0)}_F \, \rho^{t+1} \norm{g(t)}. \label{eqn:z_perp_rhs}
\end{align}
Next, we will derive an upper bound on the first term in~\eqref{eqn:zperp_1}. \\

Recall, from~\eqref{eqn:def_k_beta}, that
\[K_{\beta} = \left(\A + \beta I\right)^{-1}.\]
As $\A = V Diag\left(\lambda_1, \ldots, \, \lambda_d\right) V^T$ (see~\eqref{eqn:svd}) where the matrix $V$ comprising the orthonormal eigenvectors of $\A$ is unitary, satisfying $V V^T = I$, from above we obtain that
\begin{align}
    K_{\beta} = V \, Diag\left(\frac{1}{\lambda_1 + \beta} \, , \ldots, ~ \frac{1}{\lambda_d + \beta}\right) \, V^T. \label{eqn:decomp_k_b}
\end{align}
Also, from~\eqref{eqn:svd},~\eqref{eqn:diag_S}, and~\eqref{eqn:def_Q}, we have
\begin{align}
    \A Q = V \, Diag\left(\lambda_1, \ldots, \, \lambda_r, \, \underbrace{0, \ldots, 0}_{d-r}\right) \, V^T, \label{eqn:trunc_A}
\end{align}
where $r$ denotes the rank of matrix $\A$. From~\eqref{eqn:decomp_k_b} and~\eqref{eqn:trunc_A} we obtain that
\begin{align}
    \A Q K_{\beta} = V \, Diag\left(\frac{\lambda_1}{\lambda_1 + \beta}, \ldots, \, \frac{\lambda_r}{\lambda_r + \beta}, \, \underbrace{0, \ldots, 0}_{d-r}\right) \, V^T. \label{eqn:comb_matrices}
\end{align}
Substituting from~\eqref{eqn:comb_matrices} we obtain that
\begin{align}
    \left( I  - \delta \,  \A Q K_{\beta} \right) = V \, Diag \left(\left(1 -  \frac{\delta \lambda_1}{\lambda_1 + \beta}\right), \ldots, \, \left(1 - \frac{\delta \lambda_r}{\lambda_r + \beta} \right), \, \underbrace{1, \ldots, 1}_{d-r} \right) \, V^T . \label{eqn:all_matrices} 
\end{align}
This implies that
\begin{align}
    \norm{\left( I  - \delta \,  \A Q K_{\beta} \right) g(t)} = \norm{V \, Diag \left(\left(1 -  \frac{\delta \lambda_1}{\lambda_1 + \beta}\right), \ldots, \, \left(1 - \frac{\delta \lambda_r}{\lambda_r + \beta} \right), \, \underbrace{1, \ldots, 1}_{d-r} \right) \, V^T \, g(t)}. \label{eqn:all_matrices_g_t}
\end{align}
Now, note that from~\eqref{eqn:grad_vs_err}, $g(t) \in Im\left(\A\right)$ where $Im(\cdot)$ denotes the image of a matrix operator. Owing to the fundamental theorem of linear algebra~\cite{horn2012matrix}, $Im\left(\A\right) =  \N \left(\A \right)^{\perp}$. Thus,
\begin{align}
    g(t) \in \N(\A)^{\perp} = span\left\{ V_1, \ldots, \, V_r\right\}. \label{eqn:g_t_span}
\end{align}
Recall that the vectors $V_1, \ldots, \, V_d$, constituting the matrix $V$, are orthonormal. Therefore, due to~\eqref{eqn:g_t_span}, 
\begin{align}
    &\norm{V \, Diag \left(\left(1 -  \frac{\delta \lambda_1}{\lambda_1 + \beta}\right), \ldots, \, \left(1 - \frac{\delta \lambda_r}{\lambda_r + \beta} \right), \, \underbrace{1, \ldots, 1}_{d-r} \right) \, V^T \, g(t)} \leq  \nonumber \\
    & \max \left\{ \mnorm{1 -  \frac{\delta \lambda_1}{\lambda_1 + \beta}}, \ldots, \, \mnorm{1 - \frac{\delta \lambda_r}{\lambda_r + \beta}} \right\} \, \norm{g(t)}, \label{eqn:span_norm_g_t_0}
\end{align}
where $\mnorm{\cdot}$ denotes the absolute value. As $\lambda_1 \geq \ldots \geq \lambda_r > 0$ and $\beta > 0$, if 
\[0 < \delta < 2 \left(\frac{\lambda_1 + \beta}{\lambda_1} \right) = 2\left(1 + \frac{\beta}{\lambda_1}\right),\]
then
\begin{align}
    &\max \left\{ \mnorm{1 -  \frac{\delta \lambda_1}{\lambda_1 + \beta}}, \ldots, \, \mnorm{1 - \frac{\delta \lambda_r}{\lambda_r + \beta}} \right\} = \max \left\{ \mnorm{1 -  \frac{\delta \lambda_1}{\lambda_1 + \beta}}, \mnorm{1 - \frac{\delta \lambda_r}{\lambda_r + \beta}} \right\} \label{eqn:basic_ineq_1} \\
    \text{and} ~ & \mnorm{1 -  \frac{\delta \lambda_i}{\lambda_i + \beta}} ~ < 1, \, i=1,\ldots,r. \label{eqn:basic_ineq_2}
\end{align}
Substituting from~\eqref{eqn:basic_ineq_1} in~\eqref{eqn:span_norm_g_t_0} we obtain that
\begin{align}
    & \norm{V \, Diag \left(\left(1 -  \frac{\delta \lambda_1}{\lambda_1 + \beta}\right), \ldots, \, \left(1 - \frac{\delta \lambda_r}{\lambda_r + \beta} \right), \, \underbrace{1, \ldots, 1}_{d-r} \right) \, V^T \, g(t)} \leq \nonumber \\
    & \max \left\{ \mnorm{1 -  \frac{\delta \lambda_1}{\lambda_1 + \beta}}, \mnorm{1 - \frac{\delta \lambda_r}{\lambda_r + \beta}} \right\} \norm{g(t)}. \label{eqn:span_norm_g_t}
\end{align}
Substituting from~\eqref{eqn:span_norm_g_t} in~\eqref{eqn:all_matrices_g_t} we obtain that, for all $t$,
\begin{align}
    \norm{\left( I  - \delta \,  \A Q K_{\beta} \right) \, g(t) } \leq \max \left\{ \mnorm{1 -  \frac{\delta \lambda_1}{\lambda_1 + \beta}}, \mnorm{1 - \frac{\delta \lambda_r}{\lambda_r + \beta}} \right\} \, \norm{g(t)}. \label{eqn:bnd_first_2}
\end{align}
Finally, upon substitution from~\eqref{eqn:z_perp_rhs} and~\eqref{eqn:bnd_first_2} in~\eqref{eqn:zperp_1} we obtain that, for all $t \geq 0$,
\begin{align}
    \norm{g(t+1)} \leq \max \left\{ \mnorm{1 -  \frac{\delta \lambda_1}{\lambda_1 + \beta}}, \mnorm{1 - \frac{\delta \lambda_r}{\lambda_r + \beta}} \right\} \, \norm{g(t)} + \delta \lambda_1 \norm{\widetilde{K}(0)}_F \, \rho^{t+1} \norm{g(t)}. \label{eqn:g_t_norm_bnd_1}
\end{align}
Then,~\eqref{eqn:g_t_norm_bnd_1} and~\eqref{eqn:basic_ineq_2} prove part (i) of the theorem with 
$$\mu = \max \left\{ \mnorm{1 -  \frac{\delta \lambda_1}{\lambda_1 + \beta}}, \mnorm{1 - \frac{\delta \lambda_r}{\lambda_r + \beta}} \right\}.$$
Recall the definition of $\mu^*$ from~\eqref{eqn:opt_mu}. Note that~\cite[Chapter 11.3.3]{fessler2008image}
\begin{align}
    \mu \geq \frac{\frac{\lambda_1}{\lambda_1 + \beta}-\frac{ \lambda_r}{\lambda_r + \beta}}{\frac{\lambda_1}{\lambda_1 + \beta}+\frac{ \lambda_r}{\lambda_r + \beta}} = \frac{\beta \, (\lambda_1 - \lambda_r)}{ 2 \lambda_1 \lambda_r + \beta \, (\lambda_1 + \lambda_r)} = \mu^*, \label{eqn:bnd_mu}
\end{align}
where the equality $\mu = \mu^*$ holds true, if the value of $\delta$ is given by~\eqref{eqn:critical_delta}, which is
\begin{align*}
    \delta = \frac{2}{\frac{\lambda_1}{\lambda_1 + \beta} + \frac{\lambda_r}{\lambda_r + \beta}}.
\end{align*}
Note that as $\left(\frac{\lambda_r}{\lambda_r + \beta}\right) > 0$, 
\[\frac{2}{\frac{\lambda_1}{\lambda_1 + \beta} + \frac{\lambda_r}{\lambda_r + \beta}} < \frac{2}{\frac{\lambda_1}{\lambda_1 + \beta}} = 2 \left(\frac{\lambda_1 + \beta}{\lambda_1} \right).\]
Thus, the value of $\delta$ in~\eqref{eqn:critical_delta} satisfies the condition~\eqref{eqn:alpha_delta}.\\

{\bf Step III:} In this final step, we will prove part (ii) of the theorem. Note that, owing to the following fact, it suffices to show that the sequence of gradient norms $\left\{\norm{g(t)}\right\}_{t \geq 0}$ converges to zero.

\begin{fact} \label{fct:seq}
Consider an infinite sequence of non-negative values $\{s_t, ~ t = 0, \, 1, \ldots\}$ with 
\[s_{t} < s_{t-1}, ~ \forall t \geq 1, ~ \text{ and } ~\lim_{t \rightarrow \infty} s_t < L\]
where $L$ is a positive finite real number. Then, there exists $0 \leq T' < \infty$ such that $s_t < L, ~ \forall t > T'$.
\end{fact}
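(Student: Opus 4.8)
The plan is to invoke the monotone convergence theorem and then the $\epsilon$-definition of a limit. First I would observe that the hypotheses make $\{s_t\}_{t \geq 0}$ a strictly decreasing sequence that is bounded below by $0$, since every term is non-negative. Consequently the sequence converges; let $\ell = \lim_{t \rightarrow \infty} s_t$, so that $\ell \geq 0$, and note that the hypothesis of the fact gives $\ell < L$.

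Next I would set $\epsilon = L - \ell$, which is strictly positive. By the definition of the limit of a sequence, there exists a non-negative integer $T'$ such that $\mnorm{s_t - \ell} < \epsilon$ for every $t > T'$. In particular, for all such $t$ we get $s_t < \ell + \epsilon = L$, which is exactly the claimed conclusion, and $0 \leq T' < \infty$ by construction.

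Since every step is a direct application of standard real-analysis facts, I do not anticipate any genuine obstacle. The only point that warrants a moment's care is that the limit appearing in the statement is well-defined; this is precisely what strict monotonicity together with the lower bound $0$ supplies via the monotone convergence theorem. (Alternatively, one may read the hypothesis $\lim_{t \rightarrow \infty} s_t < L$ as already asserting that the limit exists, in which case the $\epsilon$-argument of the second paragraph suffices on its own and monotonicity is not even needed.)
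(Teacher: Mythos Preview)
Your argument is correct. The paper does not actually supply a proof of this fact; it is stated as a self-evident observation and invoked directly, so there is nothing to compare against beyond noting that your $\epsilon$-argument via the definition of the limit (optionally preceded by the monotone convergence theorem to ensure the limit exists) is exactly the standard justification one would expect.
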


In part (i) of the theorem (see~\eqref{eqn:conv_1}), let  
\[\alpha_t = \left(\mu + \delta \lambda_1 \norm{K(0)-K_{\beta}}_F \, \rho^{t+1}\right).\]
Note that $\alpha_t \geq 0$ for all $t \geq 0$. Since $\rho < 1$, the sequence $\{\alpha_t\}_{t \geq 0}$ is strictly decreasing, which means, $\alpha_t < \alpha_{t - 1}, ~ \forall t \geq 1$, and $\lim_{t \rightarrow \infty}\alpha_t = \mu < 1$. 
Thus, due to Fact~\ref{fct:seq}, there exists a positive integer $\tau$ such that
\begin{align}
    \alpha_t < 1, \quad \forall t > \tau. \label{eqn:alpha_tau}
\end{align}
From the recursion~\eqref{eqn:conv_1} in part (i), we obtain that
\begin{align}
    & \norm{g(t+1)} \leq \left(\Pi_{k=\tau+1}^t \, \alpha_k\right) \norm{g(\tau + 1)}, ~~ \forall t > \tau. \label{eqn:zclm_1}
\end{align}
As $\{\alpha_t\}_{t \geq 0}$ is a strictly decreasing sequence,~\eqref{eqn:conv_1} also implies that
\begin{align}
    & \norm{g(\tau+1)} \leq \alpha_{\tau} \norm{g(\tau)} \leq \alpha_{0} \norm{g(\tau)}. \label{eqn:recur}
\end{align}
Using recursion $\tau$ times in~\eqref{eqn:recur} we get
\begin{align}
    \norm{g(\tau+1)} \leq \alpha_{0}^{\tau+1} \norm{g(0)}. \label{eqn:zclm_2}
\end{align}
Combining~\eqref{eqn:zclm_1} and~\eqref{eqn:zclm_2} we obtain that
\begin{align}
    \norm{g(t+1)} & \leq \left(\Pi_{k=\tau+1}^t \, \alpha_k\right) \alpha_{0}^{\tau+1} \norm{g(0)} 
    \leq \left(\Pi_{k=\tau+1}^t \, \alpha_{\tau +1}\right) \alpha_{0}^{\tau+1} \norm{g(0)} \nonumber \\
    & = \alpha_{\tau+1}^t \left(\dfrac{\alpha_{0}}{\alpha_{\tau+1}}\right)^{\tau+1}\norm{g(0)}, \, \forall t>\tau. \label{eqn:zclm_3}
\end{align}
Since $\alpha_{\tau+1} < 1$ (from~\eqref{eqn:alpha_tau}), we have $$\underset{t \rightarrow \infty}{\lim} \, \alpha_{\tau+1}^t \left(\dfrac{\alpha_{0}}{\alpha_{\tau+1}}\right)^{\tau+1} = 0.$$ From~\eqref{eqn:zclm_3} then it follows that
$$\underset{t \rightarrow \infty}{\lim} \, \norm{g(t+1)} = 0.$$
Part (ii) of the theorem follows from the definition of limit. \\

\newpage
\section{Experiments}
\label{sec:exp}

In this section, we present experimental results to demonstrate the theoretical convergence guarantees of Algorithm~\ref{algo_1} (see Section~\ref{sub:conv}), and its improvement over other state-of-the-art methods (see Section~\ref{sec:comp}). \\

{\bf Setup}: We conduct experiments for different collective data matrices $A$, chosen from the benchmark datasets available in SuiteSparse Matrix Collection\footnote{https://sparse.tamu.edu}. We consider four such datasets, namely \textit{``ash608''}, \textit{``bcsstm07''}, \textit{``gr\_30\_30''}, and \textit{``qc324''}. For example, for a particular dataset \textit{``ash608''}, the matrix $A$ has $608$ rows and $d=188$ columns.
The collective observation vector $B$ is such that $B=Ax^*$ where $x^*$ is a $188$ dimensional vector, all of whose entries are unity. To simulate the distributed server-agent architecture, the data points represented by the rows of the matrix $A$ and the corresponding observations represented by the elements of the vector $B$ are divided amongst $m=10$ agents numbered from $1$ to $10$. Since the matrix $A$ for this particular dataset has $608$ rows and $188$ columns, each of the nine agents $1, \ldots, \, 9$ has a data matrix $A^i$ of dimension $60 \times 188$ and a observation vector $B^i$ of dimension $60$. The pair $(A^{10}, \, B^{10})$ available to the tenth agent is of dimension $68\times188$ and $68$ respectively. The data points for the other three datasets are similarly distributed among $m=10$ agents.

As the matrix $\A$ is positive definite in each of these cases, the optimization problem~\eqref{eqn:opt_1} has a unique solution $x^*$ for all of these datasets. \\

\begin{figure*}[htb!]
\begin{subfigure}{.5\textwidth}
  \centering
  \includegraphics[width = \textwidth]{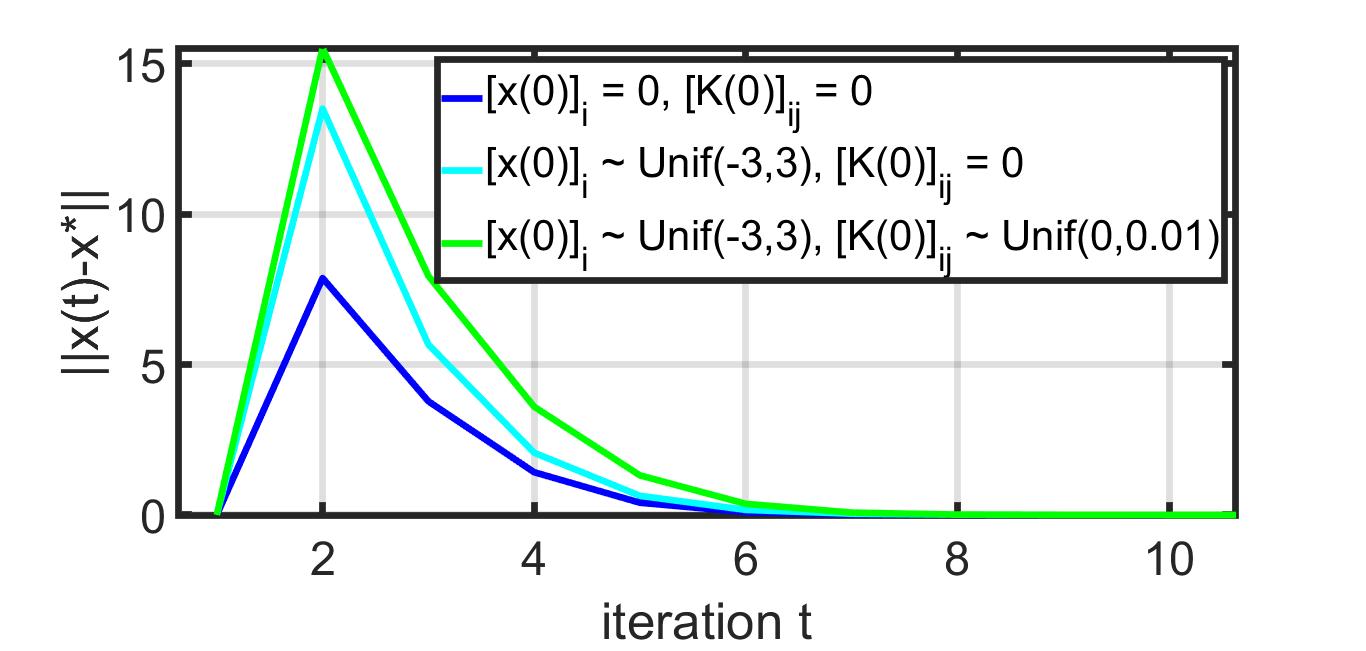}
  \caption{}
  \label{fig:ash_init}
\end{subfigure}%
\begin{subfigure}{.5\textwidth}
  \centering
  \includegraphics[width = \textwidth]{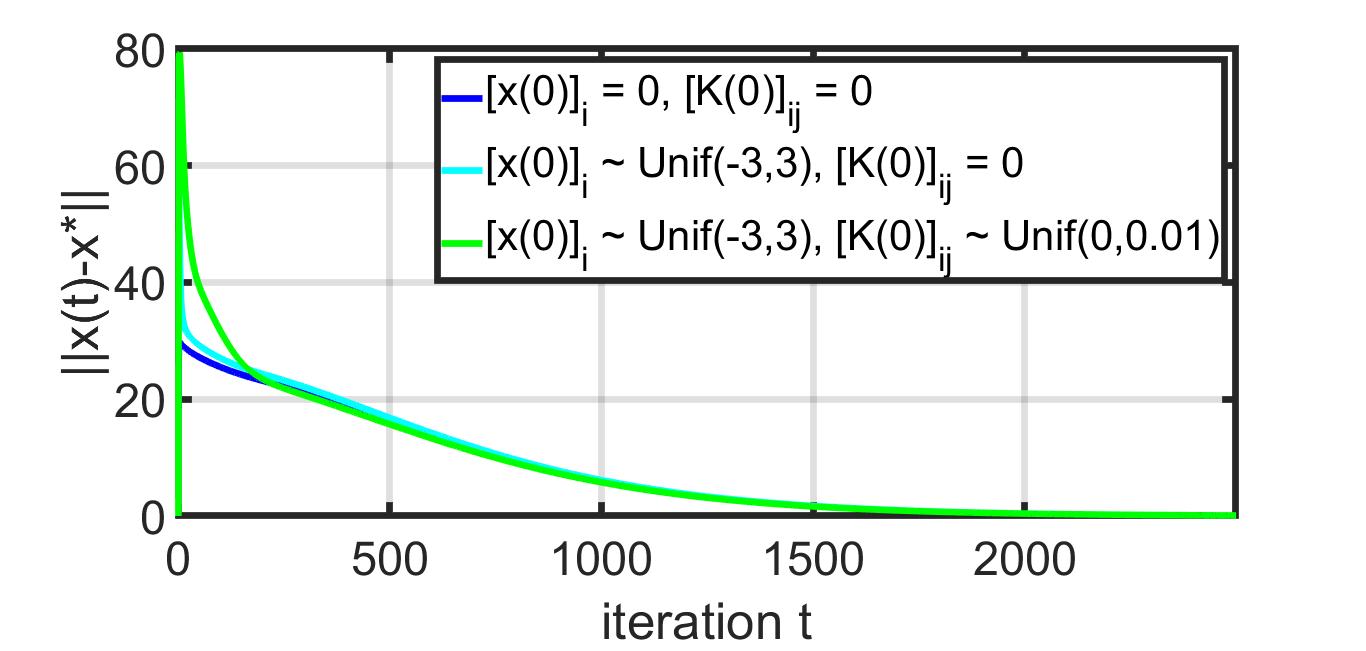}
  \caption{}
  \label{fig:gr_init}
\end{subfigure}
\caption{\footnotesize{Temporal evolution of error norm for estimate $\norm{x(t)-x^*}$ under Algorithm~\ref{algo_1} with different initialization; for the datasets (a) \textit{``ash608''} and (b) \textit{``gr\_30\_30''}.  (a) $\alpha = 0.1, \, \delta = 1, \, \beta = 0$; (b) $\alpha = 3 \times 10^{-3}, \, \delta = 0.4, \, \beta = 0$.}}
\label{fig:init}
\end{figure*}

\begin{figure*}[htb!]
\centering
\begin{subfigure}{.45\textwidth}
  \begin{center}
  \includegraphics[width = \textwidth]{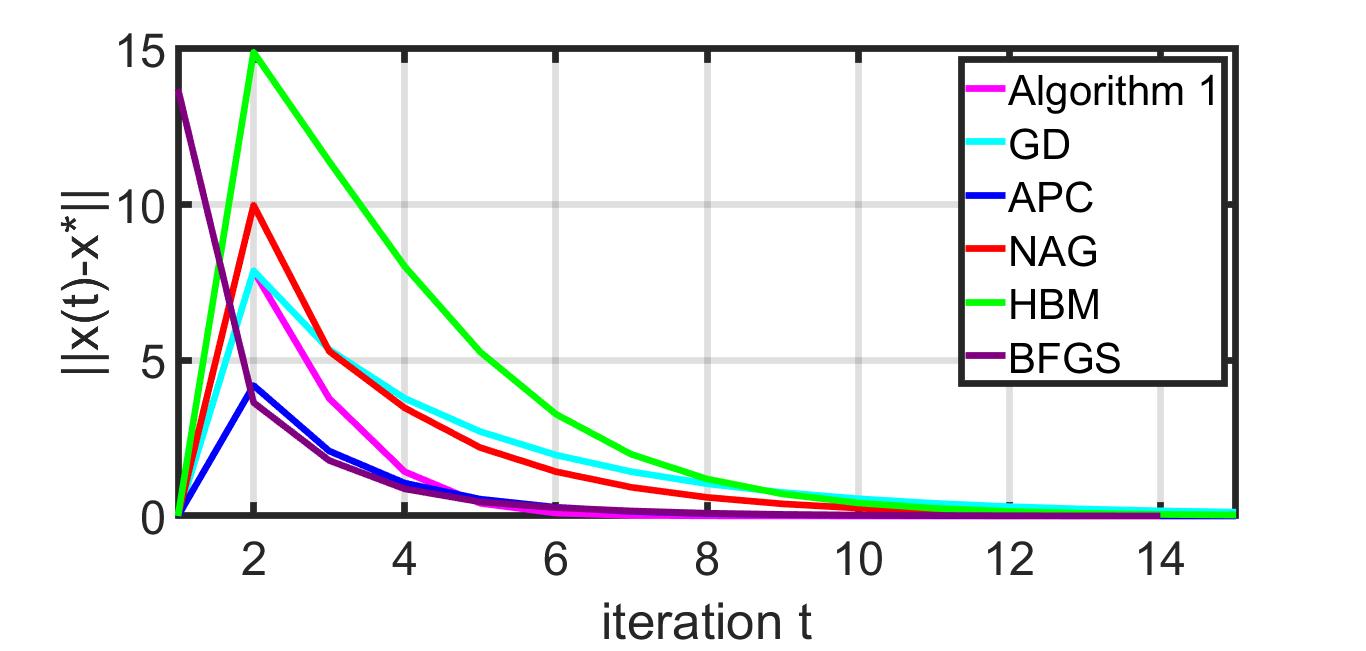}
  \caption{``ash608''}
  \end{center}
\end{subfigure}%
\begin{subfigure}{.45\textwidth}
  \begin{center}
  \includegraphics[width = \textwidth]{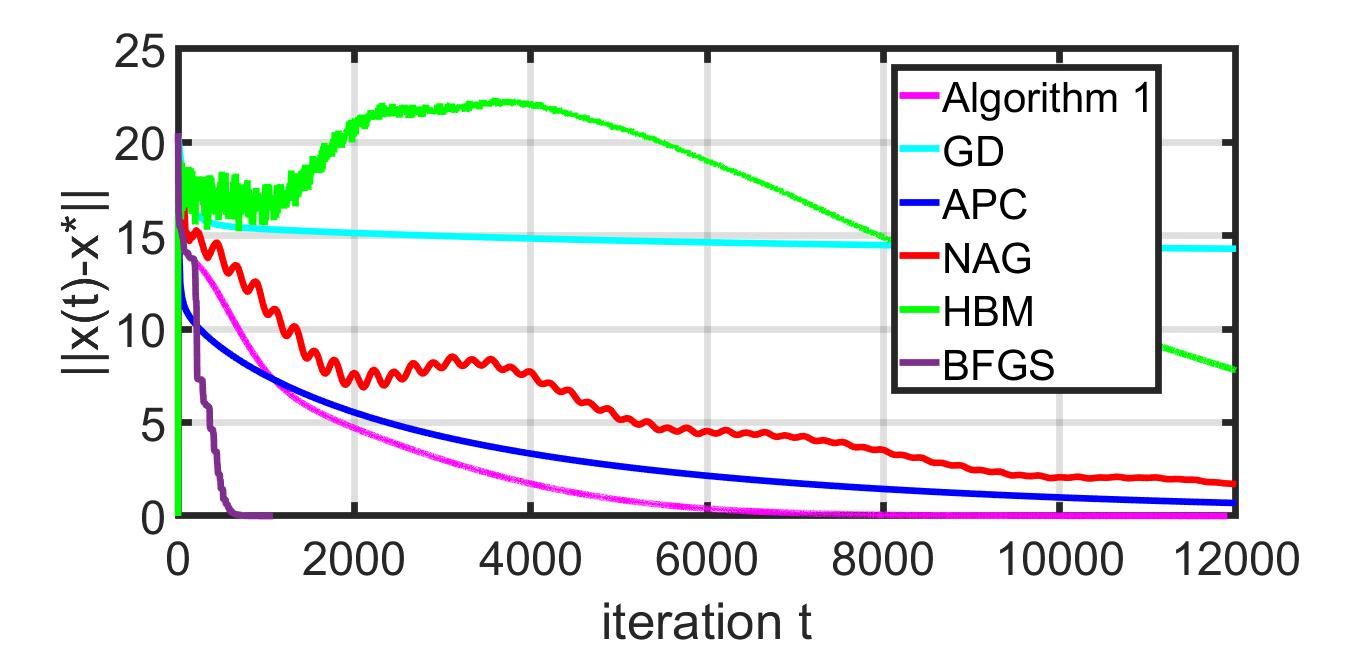}
  \caption{``bcsstm07''}
  \end{center}
\end{subfigure}
\begin{subfigure}{.45\textwidth}
  \begin{center}
  \includegraphics[width = \textwidth]{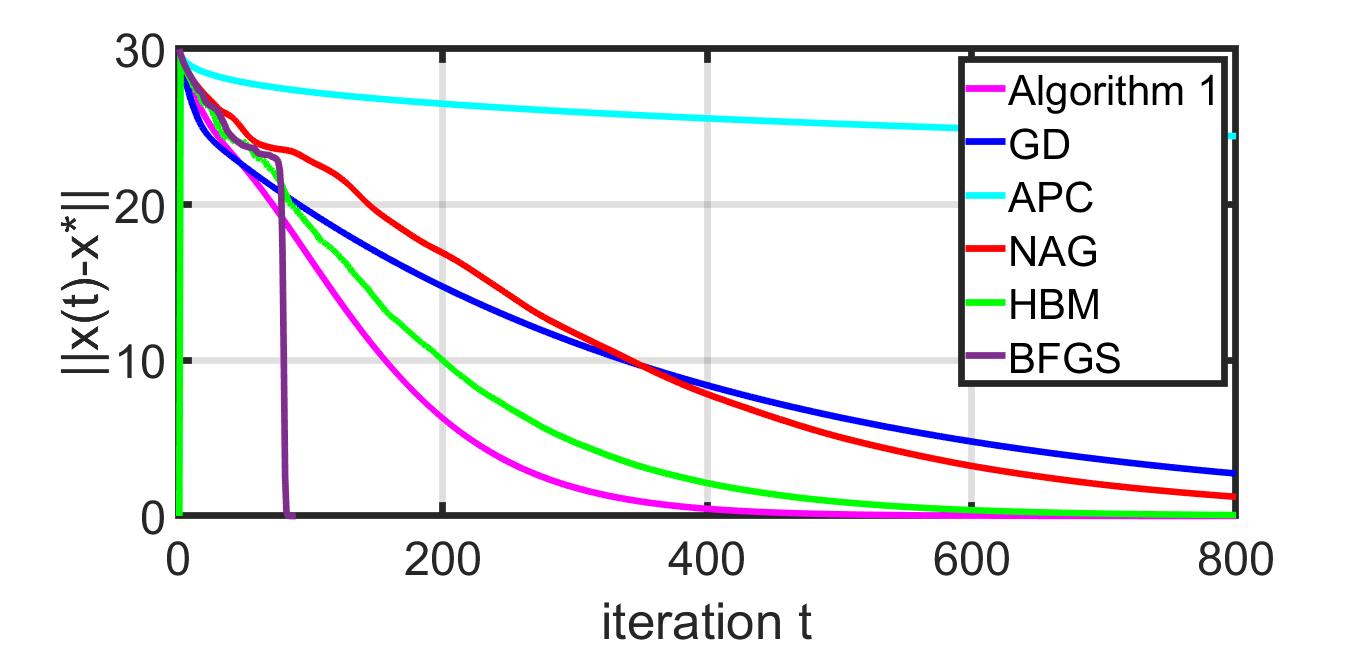}
  \caption{``gr\_30\_30''}
  \end{center}
\end{subfigure}%
\begin{subfigure}{.45\textwidth}
  \begin{center}
  \includegraphics[width = \textwidth]{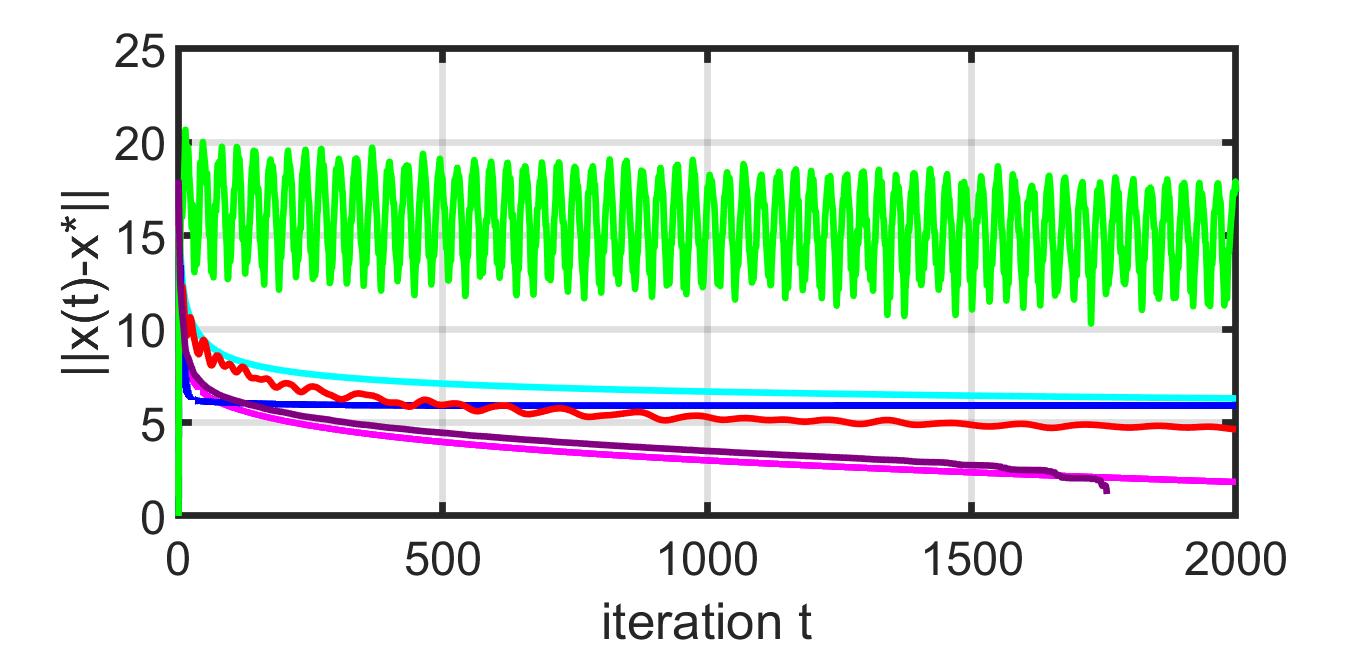}
  \caption{``qc324''}
  \end{center}
\end{subfigure}
\caption{Temporal evolution of error norm for estimate $\norm{x(t)-x^*}$, under Algorithm~\ref{algo_1}, GD, APC, NAG, HBM with optimal parameter choices and BFGS. Initialization for (a)-(d): (Algorithm~\ref{algo_1}) $x(0) = [0,\ldots,0]^T$, $K(0) = O_{d \times d}$; (GD, NAG, HBM) $x(0) = [0,\ldots,0]^T$; (APC) according to the algorithm; (BFGS) $x(0) = [0,\ldots,0]^T$, $M(0) = I$. The algorithms GD, NAG, HBM, and BFGS have been described in Section~\ref{sec:comp}. The APC algorithm can be found in~\cite{azizan2019distributed}.}
\label{fig:comp}
\end{figure*}

{\bf Global convergence of Algorithm~\ref{algo_1}}:
Since the solution set is a singleton, we apply Algorithm~\ref{algo_1} with $\beta = 0$ to solve the distributed least-squares problem~\eqref{eqn:opt_1} on the aforementioned datasets. To demonstrate the global nature of our algorithm, we simulate this algorithm with several choices for the initialization of the estimate $x(0)$ and the iterative pre-conditioner matrix $K(0)$ for two of these datasets, namely \textit{``ash608''} and \textit{``gr\_30\_30''}. The stepsize parameters $\alpha$ and $\delta$ have been chosen arbitrarily but sufficiently small. For these two datasets, we select the stepsize as:
\begin{enumerate}[label=$(\alph*)$]
    \item \textit{``ash608''}: $\alpha = 0.1, \, \delta = 1$;
    \item \textit{``gr\_30\_30''}: $\alpha = 3 \times 10^{-3}, \, \delta = 0.4$.
\end{enumerate}
With the respective stepsize as mentioned above, for either of these datasets we simulate Algorithm~\ref{algo_1} with three sets of initialization $(x(0),K(0))$: 
\begin{enumerate}[label=$(\alph*)$]
    \item each entry of $x(0)$ and $K(0)$ is zero;
    \item each entry of $x(0)$ is selected {\em uniformly} at random within $(-3,3)$ and each entry of $K(0)$ is zero;
    \item each entry of $x(0)$ and  $K(0)$ is selected {\em uniformly} at random within $(-3,3)$ and $(0,0.01)$ respectively.
\end{enumerate}
The simulation results for these two datasets are shown in Fig.~\ref{fig:init}. It can be seen that, the algorithm converges to $x^*$ irrespective of the initial choice of the entries in $x(0)$ and $K(0)$.\\

\begin{table}[h]
\caption{\it The optimal parameter values for different algorithms on real datasets.}
\begin{center}
\begin{tabular}{|p{1.4cm}|p{1.4cm}|p{1.7cm}|p{1.7cm}|p{1.7cm}|p{1.7cm}|}
\hline
Dataset & GD & NAG & HBM & APC & Algo.~\ref{algo_1} \\
\hline
ash608 & $\delta = 0.1163$ & $\delta = 0.08, \, \eta = 0.5$ & $\delta = 0.15, \, \eta = 0.29$ & $\gamma = 1.02, \, \beta =5.27$ & $\alpha = 0.1163, \, \delta = 1$\\
\hline
bcsstm07 & $\delta = 3 \times 10^{-7}$ & $\delta = 2 \times 10^{-7}, \, \eta =0.99$ & $\delta = 1 \times 10^{-7}, \, \eta =0.99$ & $\gamma = 1.09, \, \beta =12.8$ & $\alpha = 3 \times 10^{-7}, \, \delta =1$  \\
\hline
gr\_30\_30 & $\delta = 0.014$ & $\delta = 0.009, \, \eta =0.99$ & $\delta = 0.03, \, \eta =0.98$ & $\gamma = 1.09, \, \beta =12.8$ & $\alpha = 0.014, \, \delta =1$ \\
\hline
qc324 & $\delta = 0.85$ & $\delta = 0.57, \, \eta =0.99$ & $\delta = 0.03, \, \eta =0.98$ & $\gamma = 1.05, \, \beta =18.9$ & $\alpha = 0.85, \, \delta =1$ \\
\hline
\end{tabular}
\end{center}
\label{tab:parameters}
\end{table}

{\bf Comparison with related methods}:
The experimental results have been compared with the other algorithms in server-agent networks described in Section~\ref{sec:comp}, namely gradient descent method (GD) in a server-agent network, Nesterov's accelerated gradient-descent method (NAG) in a server-agent network, heavy-ball method (HBM) in a server-agent network, the accelerated projection consensus method (APC)~\cite{azizan2019distributed}, and the BFGS method in a server-agent network (ref. Fig.~\ref{fig:comp}). Among the other algorithms, APC has been recently proposed and speculated to be the fastest existing algorithm for solving~\eqref{eqn:opt_1}, if the minimum cost in~\eqref{eqn:opt_1} is zero. The distributed pre-conditioning scheme proposed in~\cite{azizan2019distributed} for improving the convergence rate of HBM has the same theoretical rate as APC.\\

The parameters for all of these algorithms are chosen such that the respective algorithms achieve their optimal (smallest) rate of convergence. For Algorithm~\ref{algo_1} with $\beta =0$, these optimal parameter values are given by $\alpha = \frac{2}{\lambda_1+\lambda_d}$ and $\delta = 1$. 
The optimal parameter expressions for the algorithms GD, NAG, and HBM can be found in~\cite{lessard2016analysis} and for that of APC in~\cite{azizan2019distributed}. We obtain these parameter values as listed in Table~\ref{tab:parameters}. The stepsize parameter for BFGS is selected using backtracking~\cite{kelley1999iterative}.
For each of the datasets, the initial estimate $x(0)$ has been chosen to be the $d$-dimensional zero vector for Algorithm~\ref{algo_1}, GD, NAG, HBM, and BFGS. The initial Hessian estimate $M(0)$ for the BFGS method has been chosen to be the identity matrix of dimension $d$. The initial pre-conditioner matrix $K(0)$ for the Algorithm~\ref{algo_1} is the zero matrix of dimension $d$. The initial $x(0)$ for the APC method is according to the algorithm. Note that evaluating the optimal tuning parameters for any of these algorithms requires knowledge about the smallest and largest eigenvalues of $\A$. 
\\

\begin{table}[h]
\caption{\it Comparisons between the number of iterations required by different algorithms with optimal parameter choices on real datasets to attain the specified values for the relative estimation errors $\epsilon_{tol} = \norm{x(t)-x^*}/\norm{x^*}$.}
\begin{center}
\begin{tabular}{|p{1.4cm}|p{1.4cm}|p{0.7cm}|p{0.6cm}|p{1.2cm}|p{1.2cm}|p{1.2cm}|p{1.2cm}|p{1.1cm}|}
\hline
Dataset & $\kappa(\A)$ & $\epsilon_{tol}$ & GD & NAG & HBM & APC & BFGS & Algo.~\ref{algo_1} \\
\hline
ash608 & $11.38$ & $10^{-4}$ & $37$ & $23$ & $21$ & $15$ & $15$ & $9$ \\
\hline
bcsstm07 & $5.8 \times 10^{7}$ & $10^{-4}$ & $> 10^5$ & $5.64 \times 10^4$ & $4.87 \times 10^4$ & $4.85 \times 10^4$ & $877$ & $1.19 \times 10^4$ \\
\hline
gr\_30\_30 & $3.79 \times 10^{4}$ & $10^{-4}$ & $>10^5$ & $1.94 \times 10^3$ & $1.13 \times 10^3$ & $1.11 \times 10^3$ & $85$ & $7.42 \times 10^2$  \\
\hline
qc324 & $2.15 \times 10^{9}$ & $0.1$ & $>10^5$ & $2.83 \times 10^4$ & $4.41 \times 10^4$ & $>10^5$ & $1.74 \times 10^3$ & $1.94 \times 10^3$  \\
\hline
\end{tabular}
\end{center}
\label{tab:time_comp}
\end{table}

We compare the number of iterations needed by these algorithms to reach a {\em relative estimation error} defined as 
$$\epsilon_{tol} = \frac{\norm{x(t)-x^*}}{\norm{x^*}}.$$
The algorithm parameters have been set such that the respective algorithms will have their smallest possible convergence rates. Their specific values have been mentioned in the previous paragraph.
Clearly, Algorithm~\ref{algo_1} performs fastest among the algorithms except BFGS, significantly for the datasets \textit{``bcsstm07''} and \textit{``gr\_30\_30''} (ref. Table~\ref{tab:time_comp}). When the condition number of $\A$ is too small (\textit{``ash608''}) or too large (\textit{``qc324''}), the difference is not significant but still Algorithm~\ref{algo_1} is faster than the other methods except BFGS. As the matrices $\A$ satisfies the condition of Corollary~\ref{cor:super_conv}, the rate of convergence of Algorithm~\ref{algo_1} is zero for each of the four datasets. Whereas, the algorithms GD, NAG, HBM, and APC are known to have a linear rate of convergence (ref. Section~\ref{sec:comp}), which means, their rate of convergence is positive. Thus, our theoretical claim on improvements over these methods is corroborated by the above experimental results. \\

\begin{figure*}[htb!]
\begin{subfigure}{.5\textwidth}
  \centering
  \includegraphics[width = \textwidth]{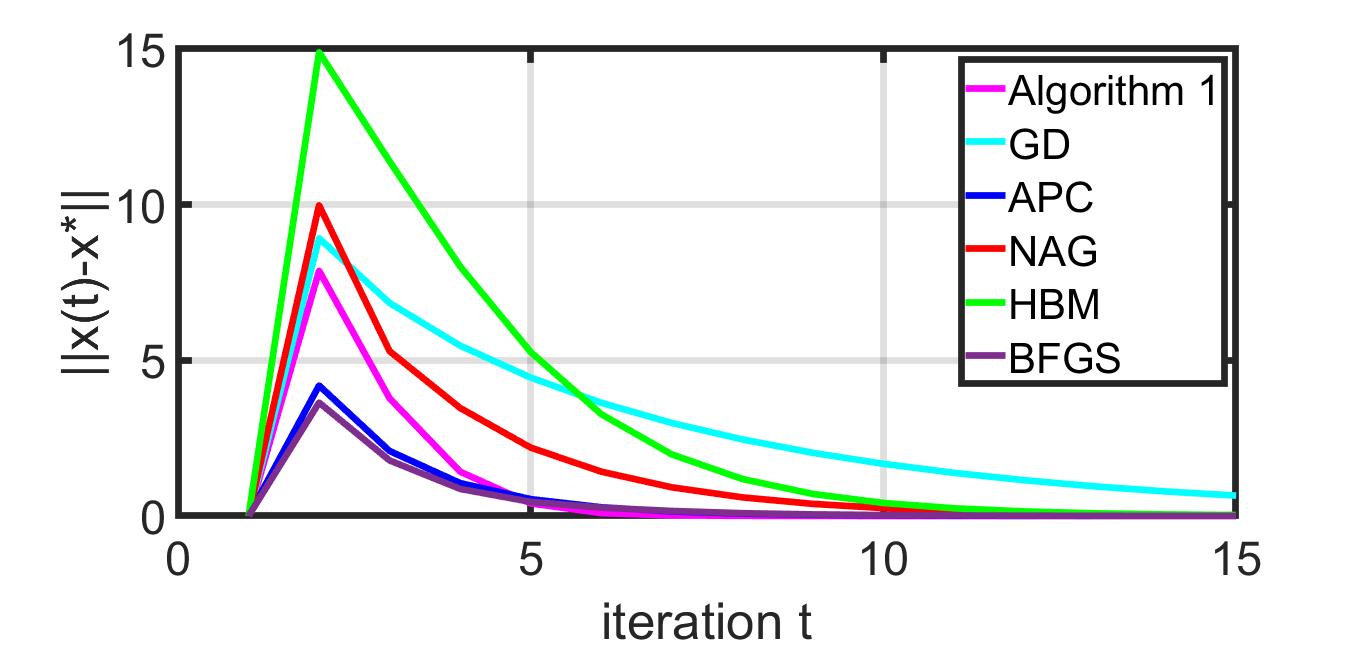}
  \caption{}
  \label{fig:ash_noise}
\end{subfigure}%
\begin{subfigure}{.5\textwidth}
  \centering
  \includegraphics[width = \textwidth]{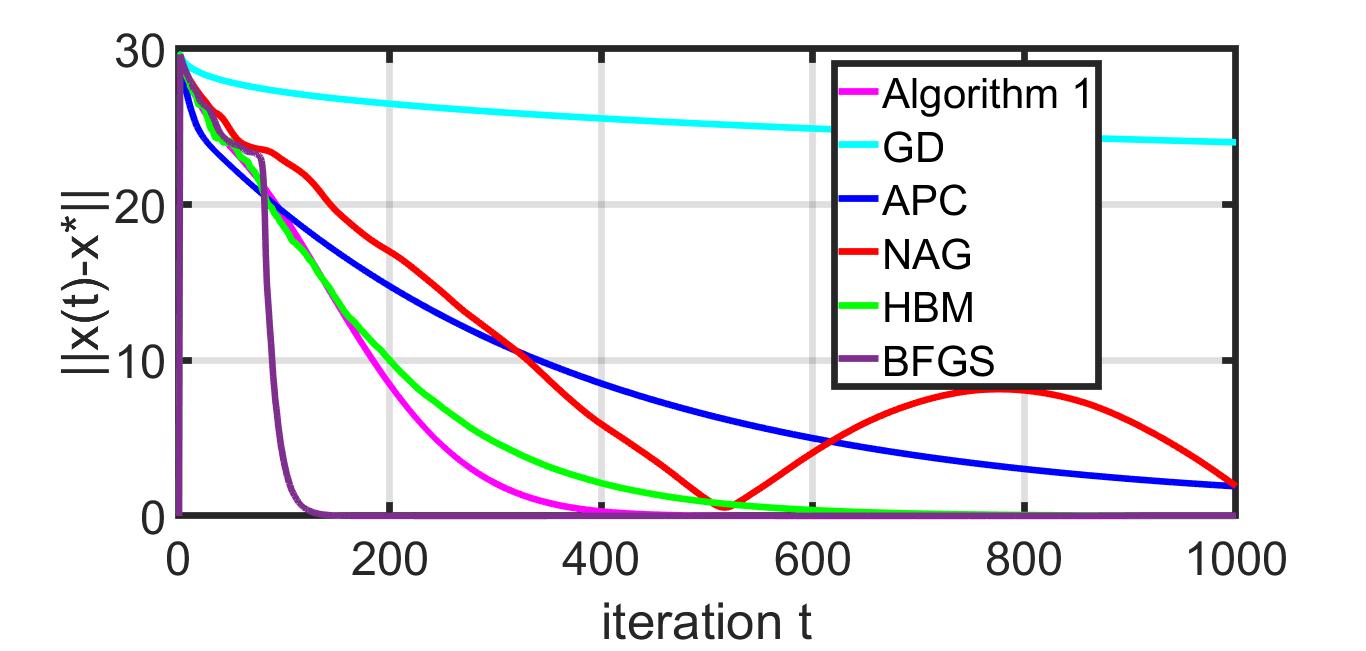}
  \caption{}
  \label{fig:gr_noise}
\end{subfigure}
\caption{\footnotesize{Temporal evolution of error norm for estimate $\norm{x(t)-x^*}$ in presence of system noise, under Algorithm~\ref{algo_1}, GD, APC, NAG, HBM with optimal parameter choices and BFGS; for the datasets (a) \textit{``ash608''} and (b) \textit{``gr\_30\_30''}. Initialization for (a) and (b) both: (Algorithm~\ref{algo_1}) $x(0) = [0,\ldots,0]^T$, $K(0) = O_{d \times d}$; (GD, NAG, HBM) $x(0) = [0,\ldots,0]^T$; (APC) according to the algorithm; (BFGS) $x(0) = [0,\ldots,0]^T$, $M(0) = I$. The algorithms GD, NAG, HBM, and BFGS have been described in Section~\ref{sec:comp}. The APC algorithm can be found in~\cite{azizan2019distributed}.}}
\label{fig:noise}
\end{figure*}

\begin{table}[h]
\caption{\it Comparisons between the asymptotic estimation errors $\lim_{t \rightarrow \infty} \norm{x(t)-x^*}$ for different algorithms with optimal parameter choices on real datasets. For Algorithm~\ref{algo_1}, $K(0) = O_{d \times d}$.}
\begin{center}
\begin{tabular}{|p{1.5cm}|p{1.5cm}|p{1.5cm}|p{1.5cm}|p{1.5cm}|p{1.5cm}|p{1.5cm}|p{1.2cm}|}
\hline
Dataset & $w$ & GD & NAG & HBM & APC & BFGS & Algo.~\ref{algo_1} \\
\hline
ash608 & $6.81 \times 10^{-4}$ & $3.46 \times 10^{-4}$ & $9.21 \times 10^{-4}$ & $10^{-4}$ & $3.74 \times 10^{-4}$ & $\infty$ & $0$ \\
\hline
gr\_30\_30 & $1.5 \times 10^{-3}$ & $7.68$ & $1.86$ & $8.5 \times 10^{-3}$ & $0.45$ & $1.49 \times 10^{-2}$ & $0$ \\
\hline
\end{tabular}
\end{center}
\label{tab:sse}
\end{table}

\subsection{Effect of Noisy Computation}

We consider the same distributed least-squares problem as above, but instead of ideal machines, the algorithms are implemented in the presence of system noise. 
Specifically, for the datasets \textit{``ash608''} and \textit{``gr\_30\_30''}, we simulate the algorithms by adding system noise to the iterated variables. For the algorithms GD, NAG, HBM, and Algorithm~\ref{algo_1}, the system noise has been generated in the form of rounding-off each entry of all the iterated variables in the respective algorithms to {\em four} decimal places. For an unbiased comparison between all the algorithms, we would like to have approximately the same level of noise $w$ for all algorithms. As done for the algorithms GD, NAG, HBM, and Algorithm~\ref{algo_1}, the rounding-off process does not generate a similar value of $w$ for the APC and BFGS algorithms. For APC, instead of rounding-off the entries, we add uniformly distributed random numbers in the range $(0,10^{-6})$ for both the datasets. Similarly for BFGS, we add uniformly distributed random numbers in the range $(0,5 \times 10^{-6})$ and $(0,2 \times 10^{-6})$ respectively for the datasets \textit{``ash608''} and \textit{``gr\_30\_30''}.\\

We compare the {\em asymptotic estimation error}, defined as $\lim_{t \rightarrow \infty} \norm{x(t)-x^*}$, of these algorithms on both of the datasets. The asymptotic estimation error is measured by waiting until the error norm $\norm{x(t)-x^*}$ does not change anymore with $t$. The algorithm parameters have been set at their optimal values. The exact values of all the parameters, including the initialization of the variables, have been provided earlier in this section. The temporal evolution of the error norm in the estimate has been plotted in Fig.~\ref{fig:noise} for both the datasets. We observe the asymptotic error of Algorithm~\ref{algo_1} to be less compared to the other algorithms (ref. Table~\ref{tab:sse}). The asymptotic error for BFGS on the dataset \textit{``ash608''} grows unbounded after $360$ iterations. The approximate Hessian matrix in the BFGS method needs to be non-singular at every iteration. Nevertheless, this condition is violated in the presence of noise, which results in the growth of error. 
 \\

\newpage
\section{Summary}
\label{sec:summary}

We have considered the multi-agent linear least-squares problem in a server-agent network. Several algorithms are available for solving this problem without hampering the privacy of each agent's raw data. However, all these methods' convergence speed is fundamentally limited by the condition number of the collective data points. We have proposed an iterative pre-conditioning technique that is robust to the condition number. Thus, we can reach a satisfactory neighborhood of the desired solution in a provably fewer number of iterations compared to the existing state-of-the-art algorithms. The theoretical analysis for convergence of our algorithm and its comparison with related methods have been supported through experiments on real-world datasets, in ideal and noisy computational environments. 
 
\section*{Acknowledgements}

This work is being carried out as a part of the Pipeline System Integrity Management Project, which is supported by the Petroleum Institute, Khalifa University of Science and Technology, Abu Dhabi, UAE. Nirupam Gupta was sponsored by the Army Research Laboratory under Cooperative Agreement W911NF- 17-2-0196.



\newpage
\bibliographystyle{unsrt}
\bibliography{refs}

\begin{thebibliography}{10}

\bibitem{zhang2017stochastic}
Yuchen Zhang and Lin Xiao.
\newblock Stochastic primal-dual coordinate method for regularized empirical
  risk minimization.
\newblock {\em The Journal of Machine Learning Research}, 18(1):2939--2980,
  2017.

\bibitem{zhu2018linear}
Yinchu Zhu and Jelena Bradic.
\newblock Linear hypothesis testing in dense high-dimensional linear models.
\newblock {\em Journal of the American Statistical Association},
  113(524):1583--1600, 2018.

\bibitem{carbonneau2008application}
Real Carbonneau, Kevin Laframboise, and Rustam Vahidov.
\newblock Application of machine learning techniques for supply chain demand
  forecasting.
\newblock {\em European Journal of Operational Research}, 184(3):1140--1154,
  2008.

\bibitem{canny2013method}
John Canny, Shi Zhong, Scott Gaffney, Chad Brower, Pavel Berkhin, and George~H
  John.
\newblock Method and system for generating a linear machine learning model for
  predicting online user input actions, January~29 2013.
\newblock US Patent 8,364,627.

\bibitem{bhowmick2006application}
Sanjukta Bhowmick, Victor Eijkhout, Yoav Freund, Erika Fuentes, and David
  Keyes.
\newblock Application of machine learning to the selection of sparse linear
  solvers.
\newblock {\em Int. J. High Perf. Comput. Appl}, 2006.

\bibitem{yang2019federated}
Qiang Yang, Yang Liu, Tianjian Chen, and Yongxin Tong.
\newblock Federated machine learning: Concept and applications.
\newblock {\em ACM Transactions on Intelligent Systems and Technology (TIST)},
  10(2):1--19, 2019.

\bibitem{smith2017federated}
Virginia Smith, Chao-Kai Chiang, Maziar Sanjabi, and Ameet~S Talwalkar.
\newblock Federated multi-task learning.
\newblock In {\em Advances in Neural Information Processing Systems}, pages
  4424--4434, 2017.

\bibitem{bertsekas1989parallel}
Dimitri~P Bertsekas and John~N Tsitsiklis.
\newblock {\em Parallel and distributed computation: numerical methods},
  volume~23.
\newblock Prentice hall Englewood Cliffs, NJ, 1989.

\bibitem{fessler2008image}
Jeffrey~A Fessler.
\newblock Image reconstruction: Algorithms and analysis.
\newblock \url{http://web.eecs.umich.edu/~fessler/book/c-opt.pdf}.
\newblock [Online book draft; accessed 17-February-2020].

\bibitem{kelley1999iterative}
Carl~T Kelley.
\newblock {\em Iterative methods for optimization}.
\newblock SIAM, 1999.

\bibitem{nesterov27method}
Y~Nesterov.
\newblock A method of solving a convex programming problem with convergence
  rate {$O$}($1/k^2$).
\newblock {\em Sov. Math. Doklady}, 27(2):372--376, 1983.

\bibitem{azizan2019distributed}
Navid Azizan-Ruhi, Farshad Lahouti, Amir~Salman Avestimehr, and Babak Hassibi.
\newblock Distributed solution of large-scale linear systems via accelerated
  projection-based consensus.
\newblock {\em IEEE Transactions on Signal Processing}, 67(14):3806--3817,
  2019.

\bibitem{polyak1964some}
Boris~T Polyak.
\newblock Some methods of speeding up the convergence of iteration methods.
\newblock {\em USSR Computational Mathematics and Mathematical Physics},
  4(5):1--17, 1964.

\bibitem{lessard2016analysis}
Laurent Lessard, Benjamin Recht, and Andrew Packard.
\newblock Analysis and design of optimization algorithms via integral quadratic
  constraints.
\newblock {\em SIAM Journal on Optimization}, 26(1):57--95, 2016.

\bibitem{fazlyab2018analysis}
Mahyar Fazlyab, Alejandro Ribeiro, Manfred Morari, and Victor~M Preciado.
\newblock Analysis of optimization algorithms via integral quadratic
  constraints: Nonstrongly convex problems.
\newblock {\em SIAM Journal on Optimization}, 28(3):2654--2689, 2018.

\bibitem{chak2020ipc}
Kushal Chakrabarti, Nirupam Gupta, and Nikhil Chopra.
\newblock Iterative pre-conditioning to expedite the gradient-descent method.
\newblock In {\em 2020 American Control Conference (ACC)}, pages 3977--3982,
  2020.

\bibitem{nocedal2006numerical}
Jorge Nocedal and Stephen Wright.
\newblock {\em Numerical optimization}.
\newblock Springer Science \& Business Media, 2006.

\bibitem{golub2012matrix}
Gene~H Golub and Charles~F Van~Loan.
\newblock {\em Matrix computations}, volume~3.
\newblock JHU press, 2012.

\bibitem{meyer2000matrix}
Carl~D Meyer.
\newblock {\em Matrix analysis and applied linear algebra}, volume~71.
\newblock Siam, 2000.

\bibitem{horn2012matrix}
Roger~A Horn and Charles~R Johnson.
\newblock {\em Matrix analysis}.
\newblock Cambridge university press, 2012.

\bibitem{holi1993finite}
Jordan~L Holi and J-N Hwang.
\newblock Finite precision error analysis of neural network hardware
  implementations.
\newblock {\em IEEE Transactions on Computers}, 42(3):281--290, 1993.

\bibitem{gold1966effects}
Bernard Gold and Charles~M Rader.
\newblock Effects of quantization noise in digital filters.
\newblock In {\em Proceedings of the April 26-28, 1966, Spring joint computer
  conference}, pages 213--219, 1966.

\bibitem{perez2014multilayer}
Alvaro~Narciso Perez-Garcia, Gerardo~Marcos Tornez-Xavier, Luis~M Flores-Nava,
  Felipe G{\'o}mez-Casta{\~n}eda, and Jose~A Moreno-Cadenas.
\newblock Multilayer perceptron network with integrated training algorithm in
  fpga.
\newblock In {\em 2014 11th International Conference on Electrical Engineering,
  Computing Science and Automatic Control (CCE)}, pages 1--6. IEEE, 2014.

\bibitem{gupta2015deep}
Suyog Gupta, Ankur Agrawal, Kailash Gopalakrishnan, and Pritish Narayanan.
\newblock Deep learning with limited numerical precision.
\newblock In {\em International Conference on Machine Learning}, pages
  1737--1746, 2015.

\bibitem{lesser2011effects}
Bernd Lesser, Manfred M{\"u}cke, and Wilfried~N Gansterer.
\newblock Effects of reduced precision on floating-point svm classification
  accuracy.
\newblock {\em Procedia Computer Science}, 4:508--517, 2011.

\end{thebibliography}

\newpage
\appendix
\section{Other Proofs}
\label{sec:proof}



\subsection{Proof of Corollary~\ref{cor:super_conv}}
\label{prf:super_conv}
Note that the solution $x^*$, defined by~\eqref{eqn:opt_1}, is unique if and only if the matrix $\A$ is full-rank, which means, $r=d$. 
Thus, in this particular case, $\A$ is symmetric positive definite matrix and has positive real eigenvalues. In other words, we have $\lambda_1 \geq \, \ldots \, \geq \lambda_d > 0$. Moreover, the inverse matrix $\left(\A\right)^{-1}$ exists, and is also a symmetric positive definite matrix. \\

As $r = d$, substituting $\beta = 0$ in~\eqref{eqn:opt_mu} and~\eqref{eqn:opt_rho}, respectively, we obtain that $\mu^* = 0$ and 
\[\varrho = \frac{\lambda_1 - \lambda_d}{\lambda_1 + \lambda_d}.\]
Now, part (i) of Theorem~\ref{thm:thm1} implies that, for $\delta = 1$ obtained by substituting $\beta = 0$ in~\eqref{eqn:critical_delta}, 
\[\norm{g(t+1)} ~ \leq ~ \lambda_1 \norm{K(0)-K_{\beta}}_F \, \rho^{t+1} \,  \norm{g(t)}, \quad \forall t \geq 0,\]
where $\varrho \leq \rho < 1$. 

\subsection{Proof of Lemma~\ref{lem:gd}}
\label{prf:gd}

Comparing the update equations of the gradient-descent method in server-agent networks, in~\eqref{eqn:dgd}, and that of Algorithm~\ref{algo_1} in~\eqref{eqn:x_update}, we see that Algorithm~\ref{algo_1} with $K(t) = I \quad \forall t\geq 0$ is the gradient-descent method in server-agent networks.
Thus, for the gradient-descent method in server-agent networks we define $K_{\beta} = I$ to which the sequence of matrices $\{K(t)\}$ converges. Now recall the definition of $\widetilde{K}(t)$ from~\eqref{eqn:tilde_k}. For the gradient-descent algorithm, we then have 
\begin{align}
    \widetilde{K}(t) = 0 \quad \forall t \geq 0.  \label{eqn:kt_zero}
\end{align}

Now we proceed exactly as the proof of Theorem~\ref{thm:thm1}, and arrive at~\eqref{eqn:zperp_1} with $\widetilde{K}(t) = 0 \quad \forall t \geq 0$ and $K_{\beta} = I$. In other words,
\begin{align*}
    \norm{g(t+1)} \leq \norm{\left(I -  \delta \A Q \, \right) \, g(t)} \quad \forall t \geq 0.
\end{align*}
Substituting the eigen-expansion of~\eqref{eqn:trunc_A} in the above inequality, we get
\begin{align}
    \norm{g(t+1)} \leq \norm{ V \, Diag\left(\left(1-\delta \lambda_1\right), \ldots, \, \left(1-\delta \lambda_r \right), \, \underbrace{1, \ldots, 1}_{d-r}\right) \, V^T \, g(t)} \quad \forall t \geq 0. \label{eqn:gt_gd}
\end{align}
Following the argument after~\eqref{eqn:all_matrices_g_t}, if $\delta \in \left(0, \frac{2}{\lambda_1}\right)$ we have from~\eqref{eqn:gt_gd}:
\begin{align*}
    \norm{g(t+1)} \leq \max \left\{ \mnorm{1 - \delta \lambda_1},  \mnorm{1 - \delta \lambda_r} \right\} \, \norm{g(t)} \quad \forall t \geq 0,
\end{align*}
and 
\[\mnorm{1 - \delta \lambda_i} < 1, \, i=1,\ldots,r.\]
Defining $\max \left\{ \mnorm{1 - \delta \lambda_1}, \mnorm{1 - \delta \lambda_r} \right\} = \mu$ we have derived~\eqref{eqn:gd_rate} in the statement of this lemma.
The smallest possible $\mu$ is given by 
\[\mu \geq \frac{\lambda_1-\lambda_r}{\lambda_1+\lambda_r},\]
which is $\mu_{GD}$ in~\eqref{eqn:mu_def}. Thus, the proof of the lemma is complete.

\subsection{Proof of Theorem~\ref{thm:comp}}
\label{prf:comp}

The statement of this theorem is a direct application of a general result stated in the following lemma. Lemma~\ref{lem:tau} compares the convergence rate of two algorithms for solving~\eqref{eqn:opt_1}, both of them having time-varying rates of contraction, one of which is smaller than the other after a finite number of iterations. We refer these algorithms as Algorithm-I and Algorithm-II.\\

To present this claim, we define a few notations. 
\begin{itemize}
\setlength\itemsep{0.5em}
    \item Let the gradient computed by Algorithm-I and Algorithm-II be denoted by $g_1(t)$ and $g_2(t)$, respectively, after $t$ iterations. 
    \item The least upper-bound of these gradients are known as follows:
\begin{align}
    \norm{g_1(t+1)} & \leq \alpha_t \norm{g_1(t)} \, \text{and} \,
    \norm{g_2(t+1)} \leq \eta_t \norm{g_2(t)}, \, \forall t. \label{eqn:z1} 
\end{align}
    \item Define the ratio between the instantaneous convergence rates as
    \begin{align}
        r_t := \dfrac{\alpha_t}{\eta_t}, \, \forall t. \label{eqn:rt}
    \end{align}
\end{itemize}

\begin{lemma} \label{lem:tau}
Suppose $\{\alpha_t > 0\}_{t \geq 0}$ is a strictly decreasing sequence and there exists some positive integer $\tau$ such that
\begin{align}
    \alpha_t < \eta_t < 1 \, \forall t>\tau. \label{eqn:alpha_eta}
\end{align}
Then $\Bar{r}:=\underset{t>\tau}{\max} \, r_t < 1$ and there exists a positive number $c$ such that
\begin{align}
    \norm{g_1(t+1)} \leq c \left(\Bar{r}\right)^{t+1} \, \left(\Pi_{k=0}^{t} \, \eta_k\right) \, \norm{g_1(0)}, \, \forall t > \tau. \label{eqn:z2}
\end{align}
\end{lemma}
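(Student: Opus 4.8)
The plan is to prove the two assertions of Lemma~\ref{lem:tau} in sequence, treating them as essentially elementary consequences of the telescoping recursion in~\eqref{eqn:z1} together with the hypothesis~\eqref{eqn:alpha_eta}. First I would establish that $\bar{r} := \max_{t > \tau} r_t$ is well-defined and strictly less than $1$. Since~\eqref{eqn:alpha_eta} gives $\alpha_t < \eta_t$ for every $t > \tau$, each ratio $r_t = \alpha_t/\eta_t$ lies in $(0,1)$. The point requiring a short argument is why the supremum over the infinite index set $\{t : t > \tau\}$ is actually attained (or at least is strictly below $1$): because $\{\alpha_t\}$ is strictly decreasing and positive, it converges to some limit $\alpha_\infty \geq 0$, and a clean way to conclude is to note that $\eta_t > \alpha_t \geq \alpha_{t+1}$ combined with $\eta_t < 1$ forces $r_t = \alpha_t/\eta_t \leq \alpha_{\tau+1}/\eta_t$; but this still has $\eta_t$ in the denominator. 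The safest route is: for each fixed $t>\tau$ we have $r_t<1$, and we take $\bar r:=\sup_{t>\tau} r_t$; if this supremum equals $1$ we would still need a uniform bound, so instead I would observe that the inequality $\alpha_t<\eta_t$ can be slightly sharpened using monotonicity of $\alpha_t$ to $r_t = \alpha_t/\eta_t < \alpha_{\tau+1}/\alpha_t \cdot (\alpha_t/\eta_t)$—this is circular, so in fact the honest statement the authors likely intend is simply that each $r_t<1$ and $\bar r$ denotes that supremum, with~\eqref{eqn:z2} then needing only $r_t \le \bar r$ termwise; I would adopt the reading that $\bar r<1$ is part of what one assumes implicitly from a uniform gap, or prove it under the natural added hypothesis that $\liminf(\eta_t-\alpha_t)>0$. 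I will state this cleanly and move on.

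Next I would derive the bound~\eqref{eqn:z2}. Iterating the first inequality in~\eqref{eqn:z1} from $0$ to $t$ gives
\begin{align}
    \norm{g_1(t+1)} \leq \left(\Pi_{k=0}^{t} \alpha_k\right) \norm{g_1(0)}, \quad \forall t \geq 0. \label{eqn:plan_iter}
\end{align}
I then split the product at $\tau$: write $\Pi_{k=0}^{t}\alpha_k = \left(\Pi_{k=0}^{\tau}\alpha_k\right)\left(\Pi_{k=\tau+1}^{t}\alpha_k\right)$. For the tail product with $k>\tau$ I use $\alpha_k = r_k\,\eta_k \leq \bar r\,\eta_k$, so $\Pi_{k=\tau+1}^{t}\alpha_k \leq \bar r^{\,t-\tau}\,\Pi_{k=\tau+1}^{t}\eta_k$. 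For the head product I factor in the missing $\eta_k$'s artificially: set $c_0 := \Pi_{k=0}^{\tau}\alpha_k$ and note $\Pi_{k=0}^{\tau}\eta_k$ is a fixed positive number, so
\begin{align}
    \Pi_{k=0}^{t}\alpha_k \;\leq\; \frac{c_0\,\bar r^{\,t-\tau}}{\Pi_{k=0}^{\tau}\eta_k}\;\Pi_{k=0}^{t}\eta_k. \label{eqn:plan_factor}
\end{align}
Absorbing the constant factors and the fixed power $\bar r^{-\tau-1}$ into a single constant $c := c_0\,\bar r^{-(\tau+1)}/\Pi_{k=0}^{\tau}\eta_k$ yields $\Pi_{k=0}^{t}\alpha_k \leq c\,\bar r^{\,t+1}\,\Pi_{k=0}^{t}\eta_k$, and substituting into~\eqref{eqn:plan_iter} gives exactly~\eqref{eqn:z2}.

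The deduction of Theorem~\ref{thm:comp} from this lemma is then the final step: I take Algorithm-I to be Algorithm~\ref{algo_1} with instantaneous rate $\alpha_t = \mu + \delta\lambda_1\norm{K(0)-K_\beta}_F\,\rho^{t+1}$ from part~(i) of Theorem~\ref{thm:thm1}, which is strictly decreasing since $\rho<1$, and Algorithm-II to be gradient descent with the constant rate $\eta_t = \mu_{GD}$ from Lemma~\ref{lem:gd}; I pick $\mu$ close enough to $\mu^*$ (using $\beta>0$, so $\mu^* < \mu_{GD}$ strictly, which follows by comparing~\eqref{eqn:opt_mu} and~\eqref{eqn:mu_def}) that $\mu < \mu_{GD}$, and then since $\alpha_t \to \mu < \mu_{GD}$ there is a finite $\tau$ with $\alpha_t < \mu_{GD} = \eta_t < 1$ for all $t>\tau$, so~\eqref{eqn:alpha_eta} holds. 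Applying~\eqref{eqn:z2} with $\eta_k \equiv \mu_{GD}$ collapses the product $\Pi_{k=0}^{t}\eta_k$ to $\mu_{GD}^{\,t+1}$ and gives $\norm{g(t+1)} \leq c\,(\bar r\,\mu_{GD})^{t+1}\,\norm{g(0)}$ with $r := \bar r < 1$, which is precisely~\eqref{eqn:z2_lem1}. The main obstacle I anticipate is the first part—pinning down rigorously why $\bar r<1$ rather than merely $\bar r\le 1$; the resolution is that $\alpha_t\downarrow\mu$ and $\eta_t\ge$ some value bounded away from $\mu$ (here $\eta_t\equiv\mu_{GD}>\mu$ in the application), so for $t$ large $r_t$ is bounded above by, say, $(\mu+\mu_{GD})/(2\mu_{GD})<1$, and the finitely many remaining $r_t$ with $\tau<t\le$ that threshold are each $<1$, giving a genuine strict bound; I would phrase the lemma's proof to make this gap condition explicit.
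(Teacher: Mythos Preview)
Your argument is correct and matches the paper's almost exactly: iterate~\eqref{eqn:z1}, split the product at $\tau$, substitute $\alpha_k=r_k\eta_k\le\bar r\,\eta_k$ for $k>\tau$, and absorb the head factors and the missing $\eta_k$'s into $c$; the only cosmetic difference is that the paper further bounds $\Pi_{k=0}^{\tau}\alpha_k\le\alpha_0^{\tau+1}$ via monotonicity, arriving at $c=(\alpha_0/\bar r)^{\tau+1}\big/\Pi_{k=0}^{\tau}\eta_k$. Your worry about $\bar r<1$ versus merely $\bar r\le 1$ is well-founded---the paper simply asserts $\bar r<1$ from $r_t<1$ for each $t>\tau$ without addressing the supremum---and your resolution for the application (where $\eta_t\equiv\mu_{GD}$, so $r_t=\alpha_t/\mu_{GD}$ is strictly decreasing and the maximum is attained at $t=\tau+1$) is exactly the right fix.
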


\begin{proof}[Proof of Lemma~\ref{lem:tau}]
From the recursion~\eqref{eqn:z1}, we get
\begin{align}
    & \norm{g_1(t+1)} \leq \left(\Pi_{k=\tau+1}^t \, \alpha_k\right) \norm{g_1(\tau + 1)}, \, \forall t > \tau. \label{eqn:z3}
\end{align}
Again from~\eqref{eqn:z1} and the fact that $\{\alpha_t > 0\}_{t \geq 0}$ is a strictly decreasing sequence, we have
\begin{align}
    & \norm{g_1(\tau+1)} \leq \alpha_{\tau} \norm{g_1(\tau)} \leq \alpha_{0} \norm{g_1(\tau)}. \label{eqn:g3}
\end{align}
Using recursion $\tau$ times in~\eqref{eqn:g3} we get
\begin{align}
    \norm{g_1(\tau+1)} \leq \alpha_{0}^{\tau+1} \norm{g_1(0)}. \label{eqn:z4}
\end{align}
Combining~\eqref{eqn:z3} and~\eqref{eqn:z4}, we have
\begin{align*}
    \norm{g_1(t+1)} & \leq \left(\Pi_{k=\tau+1}^t \, \alpha_k\right) \alpha_{0}^{\tau+1} \norm{g_1(0)}, \, \forall t>\tau.
\end{align*}
Upon substituting from the definition of $r_t$ in~\eqref{eqn:rt} we get
\begin{align*}
    \norm{g_1(t+1)} & \leq \left(\Pi_{k=\tau+1}^t \, r_k \eta_k\right) \alpha_{0}^{\tau+1} \norm{g_1(0)}
    = \left(\Pi_{k=\tau+1}^t \, r_k \eta_k\right) \left(\Pi_{k=0}^{\tau} \, \eta_k\right) \left(\dfrac{\alpha_{0}^{\tau+1}}{\Pi_{k=0}^{\tau} \, \eta_k}\right) \norm{g_1(0)}.
\end{align*}    
Using the definition $\Bar{r} = \underset{t>\tau}{\max} \, r_t$ we get
\begin{align*}
    \norm{g_1(t+1)} & \leq \left(\Pi_{k=\tau+1}^t \, \Bar{r} \eta_k\right) \left(\Pi_{k=0}^{\tau} \, \eta_k\right) \left(\dfrac{\alpha_{0}^{\tau+1}}{\Pi_{k=0}^{\tau} \, \eta_k}\right) \norm{g_1(0)}.
\end{align*}    
Algebraic simplification on the R.H.S. of the inequality leads to
\begin{align*}
    \norm{g_1(t+1)} 
    & \leq \left(\Bar{r}\right)^{t+1} \left(\Pi_{k=0}^{t} \, \eta_k\right) \, \left(\dfrac{\alpha_{0}}{\Bar{r}}\right)^{\tau+1} \left(\dfrac{1}{\Pi_{k=0}^{\tau} \, \eta_k}\right) \, \norm{g_1(0)}, \, \forall t>\tau.
\end{align*}
Thus, we have derived~\eqref{eqn:z2} with $c = \left(\frac{\alpha_{0}}{\Bar{r}}\right)^{\tau+1} \left(\frac{1}{\Pi_{k=0}^{\tau} \, \eta_k}\right)$.
Equation~\eqref{eqn:rt} and~\eqref{eqn:alpha_eta} together implies that
\[r_t  < 1 \, \forall t>\tau.\] Thus, $\Bar{r} = \underset{t>\tau}{\max} \, r_t < 1$ and the proof is complete.
\end{proof}

Now we apply this lemma for proving the theorem. For this, we need to show that the condition~\eqref{eqn:alpha_eta} of Lemma~\ref{lem:tau} is satisfied, which proceeds as follows.\\ 

Consider Algorithm-I and Algorithm-II in Lemma~\ref{lem:tau} respectively to be Algorithm~\ref{algo_1} and the gradient-descent algorithm in server-agent networks. From~\eqref{eqn:conv_1} in Theorem~\ref{thm:thm1}, for some $\delta > 0$ we have \[\alpha_t := \left(\mu^* + \delta \lambda_1 \norm{K(0)-K_{\beta}}_F \, \rho^{t+1}\right).\] 
Similarly, from~\eqref{eqn:gd_rate} in Lemma~\ref{lem:gd}, for some $\delta > 0$ we have
\[\eta_t= \mu_{GD}.\]
As $\beta > 0$ and $\lambda_1>\lambda_r$, from~\eqref{eqn:opt_mu} and~\eqref{eqn:mu_def} we can see that $\mu^* < \mu_{GD}$.
Since $\rho < 1$, the sequence $\{\alpha_t > 0\}_{t \geq 0}$ is strictly decreasing and $\lim_{t \rightarrow \infty}\alpha_t = \mu^*$.
Hence, we have a strictly decreasing sequence $\{\alpha_t > 0\}_{t \geq 0}$ such that $\lim_{t \rightarrow \infty}\alpha_t < \mu_{GD}$. Thus, the conditions of Fact~\ref{fct:seq} hold, and we have a positive integer $\tau$ such that
\begin{align}
    \alpha_t < \mu_{GD} = \eta_t \quad \forall t > \tau. \label{eqn:alpha_seq}
\end{align}
Thus, the condition~\eqref{eqn:alpha_eta} of Lemma~\ref{lem:tau} is satisfied. Then,~\eqref{eqn:z2} holds with some positive quantities $c$ and $\Bar{r} < 1$.
Finally, substituting $\eta_t = \mu_{GD}$ in~\eqref{eqn:z2} we get~\eqref{eqn:z2_lem1} with $r = \Bar{r}$. Since $\Bar{r} < 1$, the proof of the theorem is complete.

\subsection{Proof of Lemma~\ref{lem:k}}
\label{prf:k}

Observing that $A^T A = \sum_{i=1}^m (A^i)^T A^i$ and $A^T B = \sum_{i=1}^m (A^i)^T B^i$, from~\eqref{eqn:Rij} we have
\[ \sum_{i=1}^m R^i_j(t) = \left[\left(\A+\beta I\right)k_j(t) - e_j\right]. \]
Upon substituting from above, dynamics~\eqref{eqn:kcol_update} can be rewritten as
\begin{align} \label{eqn:kcol_2}
    k_j(t+1) & = k_j(t) - \alpha \left[\left(\A+\beta I\right)k_j(t) - e_j\right] \quad j=1,\ldots,d, \quad \forall t \geq 0.
\end{align}
For each iteration $t$, define $\widetilde{k}_j(t) = k_j(t) - k_{j\beta}$. Recall, from the definition~\eqref{eqn:def_k_beta}, that $K_{\beta} = \left(\A + \beta I\right)^{-1}$. Then for each column $j=1,\ldots,d$ of $K_{\beta}$ we have
\begin{align*}
    \left(\A+\beta I\right) k_{j\beta} = e_j.
\end{align*}
Upon substituting in~\eqref{eqn:kcol_2} and using the definition of $\widetilde{k}_j(t)$,
\begin{align}
    \widetilde{k}_j(t+1) = \left[I- \alpha \left(\A+\beta I\right)\right] \widetilde{k}_j(t).
\end{align}
Since $(\A+\beta I)$ is positive definite for $\beta >0$, there exists $\alpha \in \left(0,\frac{2}{\lambda_1+\beta}\right)$ for which there is a positive $\rho < 1$ such that (ref. Corollary 11.3.3 of~\cite{fessler2008image})
\begin{align}
    \norm{\widetilde{k}_j(t+1)} \leq \rho \norm{\widetilde{k}_j(t)} \quad j=1,\ldots,d, \quad \forall t \geq 0. \label{eqn_kconv}
\end{align}
Recall that the {\em condition number} of a symmetric positive definite matrix, which we denote by $\kappa(\cdot)$, is equal to the ratio between the matrix's largest and the smallest eigenvalues~\cite{fessler2008image}.
Then, the smallest value of $\rho$ is given by (ref. Chapter 11.3.3 of~\cite{fessler2008image})
\begin{align}
    \rho \geq \frac{\kappa\left(\A+\beta I\right)-1}{\kappa\left(\A+\beta I\right)+1}. \label{eqn_kcond_num}
\end{align}
As the maximum and minimum eigenvalues of $(\A+\beta I)$ respectively are $\left(\lambda_1+\beta\right)$ and $\left(\lambda_d+\beta\right)$, we have $\kappa\left(\A+\beta I\right)=\dfrac{\lambda_1+\beta}{\lambda_d+\beta}$. Thus, the lower bound in~\eqref{eqn_kcond_num} simplifies to $\varrho$ in~\eqref{eqn:opt_rho}. The claim follows from~\eqref{eqn_kconv} and~\eqref{eqn_kcond_num}.

\newpage

\section{Robustness against System Noise}
\label{sec:noise}

In this appendix, we analyze the convergence of Algorithm~\ref{algo_1} in the presence of system noise. In practice, the source of system noise can be finite precision of the machines~\cite{holi1993finite} or error in quantization~\cite{gold1966effects}. Low-precision data representation and computation has gained significant attention of researchers in machine learning algorithms~\cite{perez2014multilayer, gupta2015deep, lesser2011effects}.\\

The system noise is modeled as follows. We will denote the actual values with a superscript `$o$' to distinguish them from their noisy counterpart.

\begin{itemize}
    \item For iteration $t = 0,1,...$ and $j = 1,...,d$, 
    \begin{align}
        k_j(t) = k^o_j(t) + w^k_j(t), \label{eqn:noisy_k}
    \end{align}
    where $w^k_j(t) \in \R^d$ is an additive noise vector, and  $k^o_j(t)$ is the actual value of $k_j(t)$ when the noise vector $w^k_j(t)$ is zero.
    \item For iteration $t = 0,1,...$,
    \begin{align}
        x(t) = x^o(t) + w^x(t), \label{eqn:noisy_x}
    \end{align}
    where $w^x(t) \in \R^d$ is an additive noise vector, and  $x^o(t)$ is the actual value of $x(t)$ when the noise vector $w^x(t)$ is zero.
    \item The noise vectors are upper bounded in norm:
     \begin{align}
        \norm{w^k_j(t)} \leq w, \, \norm{w^x(t)} \leq w, \quad j\in \{1, \, \ldots, \, d\}, \, \forall t \geq 0, \label{eqn:noise_bd}
    \end{align}
    for some $w>0$.
\end{itemize}
Note that, no assumption about the probability distribution of the noise have been made. \\

The presence of system noise affects the convergence of iterative algorithms. Not only the system noise increases the error at any iteration, but also, this error propagates over the subsequent iterations, thereby resulting in a more significant error due to accumulation~\cite{holi1993finite}. Hence, it is essential to analyze the total accumulated error of an iterative method in the presence of system noise. The following result provides an upper bound on the asymptotic error of the proposed algorithm.
To be able to present the result of this section, we define a few notations. Define
\begin{align*}
    \Tilde{k}^0_j(t) & = k^0_j(t) - k_{j\beta}, \, j=1,...,d, \, \forall t \\
    S(t)^2 & = \sum_{j=1}^d \left( \rho^{t} \norm{\Tilde{k}^0_j(0)} + \left(1+\rho+...+\rho^{t} \right)w \right)^2, \, \forall t \\
    R(t) & = \lambda_1 S(t), \, \forall t \\
    \rho_{j} & = \dfrac{\norm{\Tilde{k}_j^o(0)}}{\norm{\Tilde{k}_j^o(0)}+w}, \, j=1,...,d, \\
    w_{bd} & = \dfrac{\left(1-\rho \right)}{
    \lambda_1 \sqrt{d}},
\end{align*}
where $k_{j\beta}$ is the $j$-th column of $K_{\beta}$ for $j=1,\ldots,d$.

Let $x^*$ be a point of minima of~\eqref{eqn:opt_1} and $x(t)$ be the estimate at iteration $t$ of Algorithm~\ref{algo_1}. Define the estimation error for iteration $t$ as
\begin{align}
    z(t) = x(t) - x^*. \label{eqn:err}
\end{align}
Upon substituting the definition~\eqref{eqn:noisy_x} in~\eqref{eqn:err} we get that
\begin{align*}
    z(t) = x^o(t) + w^x(t) -x^*.
\end{align*}
Noting that the actual value of noisy $z(t)$, which we denote by $z^0(t)$, is given by $x^0(t) - x^*$, we have
\begin{align}
    z(t) = z^o(t) + w^x(t), \quad \forall t \geq 0. \label{eqn:noisy_z}
\end{align}
We then have the following theorem.

\begin{theorem} \label{thm:noise}
Consider Algorithm~\ref{algo_1} with $\beta = 0$. 
Suppose the set of minimums $X^*$, defined in~\eqref{eqn:opt_1} is singleton, $0 < \alpha < \frac{2}{\lambda_1 + \beta}$ and the following conditions hold:
\begin{align}
    \rho & < \rho_{j}, \, j=1,...,d, \label{eqn:assump_2a} \\
    w & < w_{bd}. \label{eqn:assump_2b}
\end{align}
Then, for $\delta = 1$ 
\begin{itemize}
    \item there exists $T' < \infty$ such that $R(T'+1) < 1$ and
\begin{align}
    \lim_{t \rightarrow \infty} \norm{z(t)} < \dfrac{w}{1 - R(T'+1)}, \label{eqn:sse}
\end{align}
    \item with
    \begin{align}
        \lim_{t \rightarrow \infty} \dfrac{w}{1 - R(t)} = \dfrac{w}{1 - w/w_{bd}}. \label{eqn:sse_lim}
    \end{align}
\end{itemize}
\end{theorem}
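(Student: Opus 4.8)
The plan is to propagate the two noise sources of~\eqref{eqn:noisy_k}--\eqref{eqn:noisy_x} through the two coupled recursions of Algorithm~\ref{algo_1}---the one for the pre-conditioner columns $k_j(t)$ and the one for the estimate $x(t)$---and then close the resulting perturbed contraction. Throughout I would use that, since $X^*$ is a singleton, $\A$ is positive definite ($r=d$), so with $\beta=0$ we have $K_{\beta}=(\A)^{-1}$, the projection $Q$ from the proof of Theorem~\ref{thm:thm1} equals $I$, and $z(t)=z(t)^{\perp}$. For the $K$-recursion, summing~\eqref{eqn:Rij} over the agents and substituting into~\eqref{eqn:kcol_update} exactly as in the proof of Lemma~\ref{lem:k}, the update maps the noisy $k_j(t)$ to the uncorrupted $k^o_j(t+1)$ via $\widetilde{k}^o_j(t+1)=[I-\alpha\A]\,\widetilde{k}_j(t)$ where $\widetilde{k}^o_j(t)=k^o_j(t)-k_{j\beta}$, so $\norm{\widetilde{k}^o_j(t+1)}\leq\rho\,\norm{\widetilde{k}_j(t)}$ with $\rho$ as in Lemma~\ref{lem:k}. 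Adding the corruption~\eqref{eqn:noisy_k} and using~\eqref{eqn:noise_bd} gives $\norm{\widetilde{k}_j(t+1)}\leq\rho\,\norm{\widetilde{k}_j(t)}+w$, which unrolls (with noise already present at $t=0$) to $\norm{\widetilde{k}_j(t)}\leq\rho^{t}\norm{\widetilde{k}^o_j(0)}+(1+\rho+\dots+\rho^{t})w$; summing the squares over $j$ and using $\norm{M}\leq\norm{M}_F$ yields $\norm{\widetilde{K}(t)}\leq S(t)$, with $S(t)$ as defined just before the theorem.

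Next I would handle the $x$-recursion. Since the gradient is computed from the noisy $x(t)$, identity~\eqref{eqn:grad} still gives $g(t)=\A z(t)$ with $z(t)$ the noisy error, so with $\delta=1$ the update~\eqref{eqn:x_update} produces the uncorrupted $z^o(t+1)=(I-K(t+1)\A)\,z(t)$; writing $K(t+1)=(\A)^{-1}+\widetilde{K}(t+1)$, the identity term cancels and $z^o(t+1)=-\widetilde{K}(t+1)\,\A\,z(t)$. Adding the corruption~\eqref{eqn:noisy_z}, taking norms, and using $\norm{\A}=\lambda_1$ together with $\norm{\widetilde{K}(t+1)}\leq S(t+1)$ gives the perturbed contraction
\[
\norm{z(t+1)} \;\leq\; R(t+1)\,\norm{z(t)} + w, \qquad R(t)=\lambda_1 S(t).
\]

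I would then show $\{R(t)\}$ is strictly decreasing: the $j$-th summand of $S(t)^2$ is $a_j(t)^2$ with $a_j(t)=\rho^{t}\norm{\widetilde{k}^o_j(0)}+\frac{1-\rho^{t+1}}{1-\rho}w$, and a short computation gives $a_j(t+1)-a_j(t)=\rho^{t}\big(\rho(\norm{\widetilde{k}^o_j(0)}+w)-\norm{\widetilde{k}^o_j(0)}\big)$, which is strictly negative precisely under the hypothesis $\rho<\rho_j$ of~\eqref{eqn:assump_2a}; hence $S(t)$, and so $R(t)$, strictly decreases. Moreover $a_j(t)\to\frac{w}{1-\rho}$, so $R(t)\to\frac{\lambda_1\sqrt{d}}{1-\rho}\,w=w/w_{bd}$, which is $<1$ by~\eqref{eqn:assump_2b}. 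Applying Fact~\ref{fct:seq} to $\{R(t)\}$ with $L=1$ then yields a finite $T'$ with $R(T'+1)<1$, and by monotonicity $R(t)\leq R(T'+1)=:\bar{R}<1$ for all $t\geq T'+1$.

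Finally I would iterate the perturbed contraction from $t=T'$: for $t>T'$,
\[
\norm{z(t)} \;\leq\; \bar{R}^{\,t-T'}\norm{z(T')} + \big(1+\bar{R}+\dots+\bar{R}^{\,t-T'-1}\big)w \;\leq\; \bar{R}^{\,t-T'}\norm{z(T')} + \frac{w}{1-\bar{R}},
\]
and letting $t\to\infty$ gives~\eqref{eqn:sse}; the limit claim~\eqref{eqn:sse_lim} is immediate from $R(t)\to w/w_{bd}$ established above. The main obstacle is the bookkeeping in the first two steps---keeping the ``noise-free-given-noisy-input'' quantities $k^o_j$, $x^o$, $z^o$ cleanly separated from their corrupted counterparts while reusing the algebra of Lemma~\ref{lem:k} and of Step~I in the proof of Theorem~\ref{thm:thm1}---together with checking the sign condition that makes $\{R(t)\}$ strictly decreasing so that Fact~\ref{fct:seq} applies; the remaining estimates are a standard perturbed geometric-series argument.
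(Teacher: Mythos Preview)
Your proposal is correct and follows essentially the same route as the paper: propagate the noise through the $K$-recursion to obtain $\norm{\widetilde{K}(t)}\le S(t)$, use $\beta=0$ with $r=d$ so that $I-\delta K_\beta\A$ vanishes at $\delta=1$, derive the perturbed contraction $\norm{z(t+1)}\le R(t+1)\norm{z(t)}+w$, show $R(t)$ is strictly decreasing to $w/w_{bd}<1$ via the sign computation under~\eqref{eqn:assump_2a}, and close with Fact~\ref{fct:seq} and a geometric-series bound. The only differences are cosmetic: the paper tracks the noise-free quantities $\norm{\widetilde{k}^o_j(t)}$ and $\norm{z^o(t)}$ first and adds the corruption $w$ at the end of each derivation, whereas you fold the corruption in at every step; and the paper unrolls the full product $\prod_{k=1}^t R(k)$ from $t=0$ and bounds it by splitting at $T'$, whereas you restart the recursion at $T'$ with the uniform contraction factor $\bar R=R(T'+1)$. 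Both bookkeeping choices yield the identical final estimate $w/(1-R(T'+1))$.
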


Theorem~\ref{thm:noise} provides an upper bound on the asymptotic value of the estimation error norm of Algorithm~\ref{algo_1} if the noise level $w$ is sufficiently small. This condition~\eqref{eqn:assump_2b} is likely to hold if the condition number of the matrix $\A$ is not very large. Even if this condition does not hold, Algorithm~\ref{algo_1} can still result in lower asymptotic error than the related algorithms mentioned in Section~\ref{sec:comp}. This has been demonstrated by experiments in Section~\ref{sec:exp}.

\begin{proof}[Proof of Theorem~\ref{thm:noise}]
The proof is divided into two steps. \\

{\bf Step I:} Recall that, $k_{j\beta}$ denote the $j$-th column of matrix $K_{\beta}$ where $j=1, \ldots, \, d$.
Due to noise,~\eqref{eqn:kcol_update} which has been shown equivalent to~\eqref{eqn:kcol_2} becomes
\begin{align*}
    k^0_j(t+1) & = k_j(t) - \alpha \left[\left(\A+\beta I\right)k_j(t) - e_j\right] \quad j=1,\ldots,d, \quad \forall t \geq 0.
\end{align*}
Then from Lemma~\ref{lem:k}, for each $j \in \{1 \, ,\ldots, \, d\}$ and for $t \geq 0$ we have
\begin{align*}
    \norm{k^0_j(t)-k_{j\beta}} & \leq \rho \norm{k_j(t-1)-k_{j\beta}}. \nonumber
\end{align*}
Substituting from the definition~\eqref{eqn:noisy_k} we get
\begin{align*}
    \norm{k^0_j(t)-k_{j\beta}} & \leq \rho \norm{k^o_j(t-1) + w^k_j(t) - k_{j\beta}} \nonumber \\
    & \leq \rho \norm{ k^o_j(t-1)- k_{j\beta}} +\rho \norm{w^k_j(t)}. \nonumber
\end{align*}
Recall that $\Tilde{k}^0_j(t) = k^0_j(t) - k_{j\beta}$ for $j \in \{1 \, ,\ldots, \, d\}$ and for $t \geq 0$. Then the above inequality can be written as
\begin{align}
    \norm{\Tilde{k}^0_j(t)} & \leq \rho\norm{\Tilde{k}^0_j(t-1)} + \rho \norm{w^k_j(t)}, \, \forall t.
\end{align}
Using the upper bound from~\eqref{eqn:noise_bd} in above, we get
\begin{align}
    \norm{\Tilde{k}^0_j(t)} & \leq \rho\norm{\Tilde{k}^0_j(t-1)} + \rho ~ w, \, \forall t. \label{eqn:k_tilde1}
\end{align}
Using the recursion~\eqref{eqn:k_tilde1} $t$ times we get
\begin{align}
    & \norm{\Tilde{k}^0_j(t)} \leq \rho^{t} \norm{\Tilde{k}^0_j(0)} + \left(\rho+...+\rho^{t} \right)w. \label{eqn:k_tilde2}
\end{align}    
Using triangle inequality on~\eqref{eqn:noisy_k} we get
\begin{align}
    \norm{\Tilde{k}_j(t)} \leq  \norm{\Tilde{k}^0_j(t)} + \norm{w^k_j(t)} \overset{\eqref{eqn:noise_bd}}{\leq} \norm{\Tilde{k}^0_j(t)} + w. \label{eqn:k_tilde3}
\end{align}
Upon substituting from~\eqref{eqn:k_tilde2} in~\eqref{eqn:k_tilde3} we have
\begin{align}  
    \norm{\Tilde{k}_j(t)} \leq \rho^{t} \norm{\Tilde{k}^0_j(0)} + \left(1+\rho+...+\rho^{t} \right)w. \label{eqn:kcol_3}
\end{align}
Due to noise,~\eqref{eqn:x_update} which is equivalent to~\eqref{eqn:z_mltp} becomes
\begin{align}
    z^o(t+1) & = \left(I- \delta K(t+1)\A\right) z(t), \quad \forall t \geq 0. \label{eqn:znot_1}
\end{align}
Upon substituting from~\eqref{eqn:tilde_k} and~\eqref{eqn:noisy_z} in~\eqref{eqn:znot_1} we obtain that
\begin{align}
    z^o(t+1) & = \left(I- \delta K_{\beta}\A\right) \left( z^o(t)+w^x(t) \right) - \delta \widetilde{K}(t+1) \A\, \left( z^o(t)+w^x(t) \right). \label{eqn:x1}
\end{align}
From~\eqref{eqn:norm_frob_K} and~\eqref{eqn:matrix_norms} we get
\begin{align}
    \norm{\widetilde{K}(t)}^2 \leq \norm{\widetilde{K}(t)}^2_F = \sum_{j=1}^d \norm{\widetilde{k}_j(t)}^2. \nonumber
\end{align}
Substituting from~\eqref{eqn:kcol_3} in above we have
\begin{align}
    \norm{\widetilde{K}(t)}^2 \leq \underbrace{\sum_{j=1}^d \left( \rho^{t} \norm{\Tilde{k}^0_j(0)} + \left(1+\rho+...+\rho^{t} \right)w \right)^2}_{S(t)^2}
    \implies \norm{\widetilde{K}(t)} \leq S(t). \label{eqn:kt_st}
\end{align}
Applying the triangle inequality on~\eqref{eqn:x1} we get
\begin{align*}
    \norm{z^o(t+1)} & \leq \norm{I- \delta K_{\beta}\A} \left(\norm{z^o(t)}+\norm{w^x(t)} \right) + \delta \norm{\widetilde{K}(t+1)} \norm{\A} \left(\norm{z^o(t)}+\norm{w^x(t)} \right).
\end{align*}
Substituting from the bounds~\eqref{eqn:noise_bd} and~\eqref{eqn:kt_st} above we get
\begin{align}
    \norm{z^o(t+1)} & \leq \norm{I- \delta K_{\beta}\A} \left(\norm{z^o(t)}+w \right) + \delta S(t+1) \norm{\A} \left(\norm{z^o(t)}+w \right). \label{eqn:znot_2}
\end{align}
Since the solution set $X^*$ is singleton, from the argument in Appendix~\ref{prf:super_conv} we have the matrix $\A$ to be positive definite and $r=d$. Hence, the projection matrix $Q$ onto $\N(\A)^{\perp}$ is the identity matrix by definition (see~\eqref{eqn:def_Q}). In that case, from~\eqref{eqn:bnd_first_2} we have
\[ \norm{I- \delta K_{\beta}\A} \leq \mu, \]
for $0 < \delta < 2\left(1 + \frac{\beta}{\lambda_1}\right)$ and $\mu^* \leq \mu$. 
Since $X^*$ is singleton, the conditions of Corollary~\ref{cor:super_conv} hold. Thus, 
$\mu^* = 0$ for $\delta = 1$. Hence, for $\delta = 1$ we have
\[ \norm{I- \delta K_{\beta}\A} = 0. \]
Plugging the above equation and~\eqref{eqn:aQ_lambda} into~\eqref{eqn:znot_2} we have
\begin{align*}
    \norm{z^o(t+1)} \leq \underbrace{S(t+1) \lambda_1}_{R(t+1)} \left(\norm{z^o(t)}+w \right), \quad \forall t\geq 0.
\end{align*}
Using this recursion $t$ times we obtain that
\begin{align}
    & \norm{z^o(t+1)} \leq R(t+1)...R(1) \norm{z^o(0)} + \left(R(t+1) + R(t+1) R(t) + ... + R(t+1)...R(1) \right)w. \label{eqn:znot_3}
\end{align}
Again using triangle inequality on~\eqref{eqn:noisy_z} we get
\begin{align}
    \norm{z(t)} \leq  \norm{z^0_j(t)} + \norm{w^x(t)} \overset{\eqref{eqn:noise_bd}}{\leq} \norm{z^0(t)} + w. \label{eqn:znot_4}
\end{align}
Upon substituting from~\eqref{eqn:znot_3} in~\eqref{eqn:znot_4} we get
\begin{align}
    \norm{z(t+1)} \leq R(t+1)...R(1) \norm{z^o(0)} + \left(1+ R(t+1) + R(t+1) R(t) + ... + R(t+1)...R(1) \right)w. \label{eqn:x2}
\end{align}

{\bf Step II:} In this step, we obtain an upper bound on both the terms on the R.H.S. of~\eqref{eqn:x2}. For this step we define a notation \[ p_j(t) = \left( \rho^{t} \norm{\Tilde{k}^0_j(0)} + \left(1+\rho+...+\rho^{t} \right)w \right), \quad \forall t\geq 0 \]
so that $S(t)^2 = \sum_{j=1}^d p_j(t)^2$. \\

Then
\begin{align*}
    p_j(t) - p_j(t-1) & = \left(\rho^{t} - \rho^{t-1}\right) \norm{\Tilde{k}^0_j(0)} + \rho^{t} w 
    =  \rho^{t-1} \left(\rho \left(w+\norm{\Tilde{k}^0_j(0)}\right) - \norm{\Tilde{k}^0_j(0)}\right).
\end{align*}
Recall the definition of $\rho_j$:
\[\rho_j = \dfrac{\norm{\Tilde{k}_j^o(0)}}{\norm{\Tilde{k}_j^o(0)}+w}, \, j=1,...,d. \]
Using the condition~\eqref{eqn:assump_2a} in the above definition, we have that
\begin{align*}
    \rho \left(w+\norm{\Tilde{k}^0_j(0)}\right) < \norm{\Tilde{k}^0_j(0)}, \, j=1,...,d.
\end{align*}
This leads us to
$$p_j(t) < p_j(t-1) \, \forall t, \, j = 1,...,d.$$ 
Since $S(t)^2 = \sum_{j=1}^d p_j(t)^2$,
we have $S(t) < S(t-1) \, \forall t$. 
Similarly, as $R(t) = \lambda_1 S(t)$, we further have $R(t) < R(t-1) \, \forall t$. 
As $\rho < 1$, from the definition of $S(t)$ we obtain that \begin{align}
    & \lim_{t \rightarrow \infty} S(t)^2 = \sum_{j=1}^d \left(\dfrac{1}{1-\rho}w \right)^2 = d \, \left(\dfrac{w}{1-\rho} \right)^2 \nonumber\\
    \implies & \lim_{t \rightarrow \infty} R(t) =  \dfrac{ \lambda_1 \sqrt{d} w}{1-\rho}.
\end{align}
Observing that $\dfrac{1-\rho}{ \lambda_1 \sqrt{d}} = w_{bd}$ (see Appendix~\ref{sec:noise}), we get
\begin{align}
    & \lim_{t \rightarrow \infty} R(t) =  \dfrac{w}{w_{bd}}\overset{\eqref{eqn:assump_2b}}{<} 1. \label{eqn:r_lim}
\end{align}
Since $\lim_{t \rightarrow \infty} R(t) < 1$ and $0< R(t) < R(t-1) \, \forall t$, from Fact~\ref{fct:seq} there exists $T' < \infty$ such that $R(t) < 1 \, \forall t > T'$.  \\

Next, we upper bound the first term in~\eqref{eqn:x2}.
Using the fact that $\{R(t)\}$ is a strictly decreasing sequence, for $t > T'$ we get
\begin{align}
    & \Pi_{k=1}^t R(k) = \Pi_{k=1}^{T'} R(k) \, \Pi_{k=T'+1}^t R(k) < \Pi_{k=1}^{T'} R(1) \, \Pi_{k=T'+1}^t R(T'+1) = \left(R(1)\right)^{T'} \, \left(R(T'+1)\right)^{t-T'}. \nonumber
\end{align}
Since $R(T'+1) < 1$ and $R(1)$ is constant, the above inequality implies that
\begin{align}
    \lim_{t \rightarrow \infty} \Pi_{k=1}^t R(k) = 0. \label{eqn:x3}
\end{align}
~\\

Now we bound the second term in~\eqref{eqn:x2}. As $\{R(t)\}$ is strictly decreasing sequence, for $t > T'$ we have
\begin{align*}
   & 1 + R(t) + R(t)R(t-1) + \ldots + R(t)\ldots R(T'+1) + R(t)\ldots R(T'+1)R(T') + \ldots R(t)\ldots R(1) \\
   < & 1 + R(T'+1) + \left(R(T'+1)\right)^{2} + \ldots + \left(R(T'+1)\right)^{t-T'} + \left(R(T'+1)\right)^{t-T'}R(T') + \ldots + \\ & \left(R(T'+1)\right)^{t-T'} \, R(T')...R(1) \\
   = & 1 + R(T'+1) + \left(R(T'+1)\right)^{2} + \ldots + \left(R(T'+1)\right)^{t-T'} + \left(R(T'+1)\right)^{t-T'}\,\underbrace{\left(R(T')+\ldots + R(T')...R(1)\right)}_{constant}.
\end{align*}
Since $R(T'+1) < 1$, the above inequality implies that
\begin{align}
    \lim_{t \rightarrow \infty} \left(1 + R(t) + R(t) R(t-1) + ... + R(t)...R(1) \right) < \dfrac{1}{1 - R(T'+1)}. \label{eqn:x4}
\end{align}

The first statement in~\eqref{eqn:sse} follows from plugging the upper bounds~\eqref{eqn:x3} and~\eqref{eqn:x4} into~\eqref{eqn:x2}. The second statement in~\eqref{eqn:sse_lim} follows by taking limit and plugging in from~\eqref{eqn:r_lim}.
\end{proof}



\addtolength{\textheight}{-12cm}

\end{document}